\newtheorem{prop}{Proposition}[section]
\newtheorem{teo}[prop]{Theorem}
\newtheorem{lem}[prop]{Lemma}
\newtheorem{cor}[prop]{Corollary}
\theoremstyle{definition}
\newtheorem{defi}[prop]{Definition}
\newtheorem{example}[prop]{Example}
\newtheorem{rmk}[prop]{Remark}
\newtheorem{notation}[prop]{Notation}
\numberwithin{prop}{subsection} 
\newcommand{\ZZ}{\mathbb{Z}}
\newcommand{\Cc}{\mathcal{C}}
\newcommand{\Dd}{\mathcal{D}}
\newcommand{\Ee}{\mathcal{E}}
\newcommand{\Ss}{\mathcal{S}}
\newcommand{\Ww}{\mathcal{W}}
\newcommand{\Sur}{\mathcal{S}ur}
\newcommand{\Fib}{\mathcal{F}ib}
\newcommand{\Tot}{\mathrm{Tot}}
\newcommand{\Hom}{\mathrm{Hom}}
\newcommand{\Id}{\mathrm{Id}}
\newcommand{\Ker}{\mathrm{Ker}}
\newcommand{\kk}{R}
\newcommand{\ncpx}{n\text{-}\mathrm{mC}_\kk} 
\newcommand{\mcpx}{\mathrm{mC}_\kk} 
\newcommand{\rbc}{r\text{-}\mathrm{bC}_\kk}            
\newcommand{\modu}{\mathrm{Mod}_\kk} 		
\newcommand{\fcpx}{\mathbf{F}\mathrm{C}_\kk} 			
\newcommand{\spse}{\mathbf{SpSe}_\kk} 			
\newcommand{\Tc}{{\mathrm{tC}_R}} 		
\newcommand{\simr}[1]{\begin{array}{c}\vspace{-.3cm} \simeq\\ \vspace{-.4cm}\text{\tiny{$#1$}}\vspace{.36cm} \end{array}}
\newcommand{\pb}{\ar@{}[dr]|{\mbox{\LARGE{$\lrcorner$}}}}
\newcommand{\lra}{\longrightarrow}
\title{Homotopy theory of spectral sequences}
\author{Muriel Livernet}
\address[M. Livernet]{
Univ Paris Cit\'e, Institut de Math\'ematiques de Jussieu-Paris Rive Gauche, CNRS, Sorbonne Universit\'e, 8 place Aur\'elie Nemours, F-75013, Paris, France}
\email{muriel.livernet@imj-prg.fr}
\author{Sarah Whitehouse}
\address[S. Whitehouse]{
School of Mathematics and Statistics\\ 
University of Sheffield\\ S3 7RH\\ England}
\email{s.whitehouse@sheffield.ac.uk }
\thanks{}
\subjclass[2020]{
18G40, 
18N40} 
\keywords{spectral sequence, model category}
\begin{document}

\begin{abstract}Let $\kk$ be a commutative ring with unit. We
consider the homotopy theory of the category of spectral sequences of $\kk$-modules
with the class of weak equivalences given by those morphisms inducing
a quasi-isomorphism at a certain fixed page. We show that this admits a structure
close to that of a category of fibrant objects in the sense of Brown and in particular the structure of
a partial Brown category with fibrant objects. We use this to compare with related structures on
the categories of
multicomplexes and  filtered complexes.
\end{abstract}

\maketitle

\setcounter{tocdepth}{2}
\setcounter{secnumdepth}{2}
\tableofcontents

\section{Introduction}

Spectral sequences are important tools for computing homological and homotopical invariants. Many categories of interest have associated functorial
spectral sequences, generally via an associated filtered chain complex. 

The category of spectral sequences has a hierarchy of notions of weak equivalence. For $r\geq 0$, we have $E_r$-quasi-isomorphims, that is morphisms 
which are isomorphisms from the $r+1$ page onwards. In this paper we explore underlying homotopy theories with these weak equivalences.

Various categories with associated functorial spectral sequences, such as filtered complexes or multicomplexes, can be endowed with an 
$r$-model category structure, in which the weak equivalences are the maps inducing an isomorphisms from the $r+1$ page of the associated spectral sequence onwards~\cite{CELW20, FGLW}. This motivates a study of the corresponding homotopy theory in the category of spectral sequences itself.

After some preliminary definitions and discussion in Section~\ref{sec:prelim}, we introduce the category of spectral sequences
in Section~\ref{sec:specseq}. We study some basic properties, noting that this category
 is neither complete nor cocomplete. Therefore we cannot have model category structures
and we will work with a weaker setting for homotopy theory.

Many such settings, intermediate between a category with weak equivalences and a model category,
 have appeared in the literature. Examples include Waldhausen categories~\cite{W}, Cartan-Eilenberg categories~\cite{GNPR} and
categories of fibrant objects. The latter were introduced and studied by K.S.~Brown in~\cite{Brown}. A summary of this theory can be found in~\cite[I.9]{GJ}. That setting is the most relevant for us, but it is not precisely what we need.

In  Section~\ref{sec:abc}, we introduce the notion of an \emph{almost Brown category}. As the name suggests this is a structure closely related to Brown's notion of category of fibrant objects.
Like that setting, ours involves two distinguished classes of morphisms, weak equivalences and fibrations, satisfying certain axioms. We explore the connections as well as the relationship to the notion of partial Brown category in the sense of Horel~\cite{Horel}.

We show in Theorem~\ref{T:fund} that, for each $r\geq 0$, the category of spectral sequences admits the structure of 
an almost Brown category with $E_r$-quasi-isomorphims as weak equivalences and with fibrations characterised by surjectivity conditions. In particular, this means that we have a  
 partial Brown category with fibrant objects, in the sense of Horel~\cite{Horel}. Indeed, we have a version with functorial path objects. 

These results provide a context in which we can compare the homotopy theoretic structure of the category of spectral sequences with 
previous results establishing such structures for filtered complexes~\cite{CELW20} and for multicomplexes~\cite{FGLW}. 
We make a start on such comparisons in Section~\ref{sec:comparisons}.

\subsection*{Acknowledgements}
We would like to thank Geoffroy Horel for very helpful suggestions.

\section{Preliminaries}
\label{sec:prelim}
In this preliminary section, we collect the main definitions that we will use. We begin with bigraded modules and $r$-bigraded complexes
as these are underlying definitions for spectral sequences. Then we cover filtered complexes and multicomplexes, these being the
main categories to be compared with spectral sequences later on.

Throughout this paper, we let $\kk$ denote a commutative ring with unit. 

\subsection{Bigraded complexes}

In this section we let $r\geq 0$ be an integer.

\begin{defi}\label{D:bigraded modules} A \emph{bigraded $\kk$-module} $A$ is a collection of $\kk$-modules $A=\{A^{p,q}\}$ with $p,q\in\ZZ$. 
\end{defi}

\begin{defi}\label{D:r-complex} An \textit{$r$-bigraded complex}  is a bigraded $\kk$-module $A=\{A^{p,q}\}$ together with maps of $\kk$-modules
$\delta_r:A^{p,q}\to A^{p-r, q+1-r}$, called differentials, such that $\delta_r^2=0$. A \textit{morphism of $r$-bigraded complexes} is a map of bigraded modules commuting with the differentials.
\end{defi}

We denote by $\rbc$ the category of 
$r$-bigraded complexes. The homology of an $r$-bigraded complex  is a bigraded $\kk$-module and the category of $r$-bigraded modules has a natural class of quasi-isomorphisms, namely morphisms inducing isomorphisms on homology.

\subsection{Filtered complexes}
We consider unbounded complexes of $R$-modules
endowed with increasing filtrations indexed by the integers.

\begin{defi}\label{def:fm_obj}
A \textit{filtered $\kk$-module} $(A,F)$ is an $\kk$-module $A$ together with a family of submodules of A denoted $\{F_pA\}_{p\in\ZZ}$ 
indexed by the integers such that $F_{p-1}A\subseteq F_pA$ for all $p\in\ZZ$.
A \textit{morphism of filtered modules} is a morphism $f:A\to B$ of $\kk$-modules
which is compatible with filtrations:
$f(F_pA)\subseteq F_pB$ for all $p\in\ZZ$.
\end{defi}

\begin{defi}\label{def:fc_obj}A \textit{filtered complex} $(A,d,F)$ is an unbounded cochain complex $(A,d)$ together with
a filtration $F$ of each $\kk$-module $A^n$ such that 
$d(F_pA^n)\subseteq F_pA^{n+1}$ for all $p,n\in\ZZ$.
Note in particular that $(F_pA,d|_{F_p})$ is a subcomplex of $(A,d)$.
Denote by $\fcpx$ the category of filtered complexes of $R$-modules.
Its morphisms are given by morphisms of complexes compatible with filtrations.
\end{defi}

\begin{defi}\label{def:rhtpy_fcpx}
Let $f,g:A\to B$ be two morphisms of filtered complexes. 
An \textit{$r$-homotopy from $f$ to $g$} is 
a morphism $h:A\to B$ of graded  $\kk$-modules of degree $-1$, such that $dh+hd=g-f$ and $h(F_pA)\subseteq F_{p+r}B$ for all $p$.
We write $h:f\simr{r}g$.
\end{defi}

Every filtered complex $A$ has an associated spectral sequence $\{E_r(A),\delta_r\}_{r\geq 0}$.
The $r$-page $E_r(A)$ is an $r$-bigraded complex and may be written as the quotient
\[E_r^{p,q}(A)\cong Z_r^{p,q}(A)/B_r^{p,q}(A),\] where the \textit{$r$-cycles} are given by 
\[Z_r^{p,n+p}(A):=F_pA^{n}\cap d^{-1}(F_{p-r}A^{n+1})\]
and the \textit{$r$-boundaries} are given by $B_0^{p,n+p}(A)=Z_0^{p-1,n+p-1}(A)$ and
\[B_r^{p,n+p}(A):=Z_{r-1}^{p-1,n+p-1}(A)+ dZ_{r-1}^{p+r-1,n+p+r-2}(A)\text{ for }r\geq 1.\]
Given an element $a\in Z_r(A)$, we denote by $[a]_r$ its image in $E_r(A)$. For $[a]_r\in E_r(A)$,
we have $\delta_r([a]_r)=[da]_r$.

\subsection{Multicomplexes}

\begin{defi}
  A \emph{multicomplex} or \emph{$\infty$-multicomplex} $A$ is a bigraded $\kk$-module $A = \{A^{p,q}\}_{p,q\in\ZZ}$  endowed with a family of maps $\{d_i \colon A \to A\}_{i \ge 0}$ of bidegree $(-i,1-i)$ satisfying for all $l \ge 0$,
  \begin{equation}\label{E:multicomplex}
    \sum_{i+j=l} (-1)^i d_id_j=0.
  \end{equation}
  Let $n \ge 1$ be an integer.
  An \emph{$n$-multicomplex} is a multicomplex with $d_i=0$ for all $i\geq n$.

  For $1 \leq n \leq \infty$, a \emph{\textup(strict\textup) morphism} of $n$-multicomplexes is a map $f$ of bigraded $\kk$-modules of bidegree $(0,0)$ satisfying $d_if=fd_i$ for all $i\geq 0$.
  We denote by $\ncpx$ the category of $n$-multicomplexes and strict morphisms.
\end{defi}

\section{The category of spectral sequences}\label{sec:specseq}

\subsection{Definitions and basic properties}\label{subsec:defs}

\begin{defi}\label{def:spse_obj}
A \textit{spectral sequence} $(A,\psi)$ is a family of $r$-bigraded complexes $(A_r, d_r)$, for $r\geq 0$, together with a family of isomorphisms of bigraded $\kk$-modules $\psi_r: H_*(A_r)\rightarrow A_{r+1}$ for $r\geq 0$, called \textit{characteristic maps}.

A \textit{morphism of spectral sequences} is a family of morphisms $f_r:A_r\to B_r$ of $r$-bigraded complexes, for $r\geq 0$,
which is compatible with characteristic maps. We denote by $\spse$ the category of spectral sequences. 
\end{defi}

Note that $\spse$ is a subcategory of the product category $\prod_{r\geq 0} \rbc$ .

We will often omit the characteristic maps in the notation.
\smallskip

Note that a morphism of spectral sequences $f:(A,\psi)\rightarrow (B,\varphi)$ is completely determined by the $0$-page, $f_0:A_0\rightarrow B_0$, since $f_{i+1}=\varphi_i H_*(f_i)\psi_i^{-1}$, for all $i\geq 0$.
Furthermore, it is clear that the following proposition holds.

\begin{prop} The category of spectral sequences is an additive category. \qed
\end{prop}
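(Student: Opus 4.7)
The plan is to check the axioms of an additive category one by one, leveraging the fact (just noted) that $\spse$ is a subcategory of the product $\prod_{r\geq 0}\rbc$ and that each $\rbc$ is itself additive. Since a morphism of spectral sequences is determined by its $0$-page and the rule $f_{i+1}=\varphi_i H_*(f_i)\psi_i^{-1}$, many verifications reduce to statements about morphisms of $0$-bigraded complexes together with a compatibility of $H_*$ with the additive structure.

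First I would produce a zero object by taking the spectral sequence with $A_r=0$ for every $r$ and all $\psi_r$ the unique isomorphism $H_*(0)\to 0$. For any spectral sequence $B$, the unique candidate morphism $0\to B$ (resp.\ $B\to 0$) on the $0$-page is the zero map, and inductively using $f_{i+1}=\varphi_i H_*(f_i)\psi_i^{-1}$ all higher pages are zero as well, so this indeed defines a morphism of spectral sequences.

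Next I would endow each hom-set $\Hom_{\spse}(A,B)$ with an abelian group structure by setting $(f+g)_r:=f_r+g_r$, the sum being taken in $\Hom_{\rbc}(A_r,B_r)$. To check that this sum lies in $\spse$ one uses that $H_*\colon\rbc\to\bimod$ is an additive functor, so that
\[\varphi_r H_*(f_r+g_r)\psi_r^{-1}=\varphi_r H_*(f_r)\psi_r^{-1}+\varphi_r H_*(g_r)\psi_r^{-1}=f_{r+1}+g_{r+1},\]
which is precisely the compatibility with characteristic maps required for $f+g$ to be a morphism of spectral sequences. Bilinearity of composition is inherited from bilinearity of composition in each $\rbc$.

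Finally I would construct binary biproducts. Given $A,B\in\spse$, set $(A\oplus B)_r:=A_r\oplus B_r$ with differential $d_r^A\oplus d_r^B$. Because $H_*$ preserves finite direct sums of $r$-bigraded complexes, the natural map $H_*(A_r)\oplus H_*(B_r)\to H_*(A_r\oplus B_r)$ is an isomorphism, and composing its inverse with $\psi_r^A\oplus\psi_r^B$ yields an isomorphism $H_*((A\oplus B)_r)\to (A\oplus B)_{r+1}$, furnishing the characteristic maps. The evident inclusions and projections on $0$-pages propagate (by the same inductive rule) to morphisms of spectral sequences and exhibit $A\oplus B$ as both a product and a coproduct. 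The only point that needs any care is the compatibility of the abelian-group structure and the biproduct with the characteristic maps; both reduce to additivity of $H_*$ on $r$-bigraded complexes, which is standard, so I do not expect a genuine obstacle.
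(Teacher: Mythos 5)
Your proof is correct and fills in exactly the verification the paper leaves implicit: the authors simply assert the proposition as clear (with no written proof), and the standard check via the zero spectral sequence, pagewise addition of morphisms, and pagewise biproducts — all hinging on additivity of $H_*$ and its compatibility with the characteristic maps — is what they have in mind. No discrepancy to report.
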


\begin{rmk}
There are various conventions for spectral sequences. We have chosen ours to be compatible with the conventions for filtered complexes and multicomplexes in previous work on related model category structures in~\cite{CELW20, FGLW}. The differential on the $r$-page has bidegree $(-r, 1-r)$. Of course, it is straightforward to translate our results to the standard setting of a homological spectral sequence where the corresponding bidegree is $(-r, r-1)$ or that of a cohomological spectral sequence where it is $(r,1-r)$.
\end{rmk}

\begin{rmk}
The category of spectal sequences $\spse$ is neither complete nor cocomplete. Indeed, as in the following examples, 
cokernels and kernels do not exist in general in $\spse$. Thus it is not a pre-abelian category.
\end{rmk}

We write $R^{p,n}$ for the ring $R$ in bidegree $(p,n)$.
We denote by $R(p,n)$ the spectral sequence with the ring $R$ concentrated in bidegree $(p,n)$ and all differentials zero.

\begin{example}
\label{cocompletefalse}
Let $S$ be the spectral sequence given by
 $S_0=R^{0,0}\oplus R^{1,0}$ with $d_0=0$, $S_1=R^{0,0}\oplus R^{1,0}$ with $d_1=1_R: R^{1,0}\to R^{0,0}$
and $S_{\geq 2}=0$.
The morphism of spectral sequences $f:R(0,0)\rightarrow S$ determined by $f_0^{0,0}=f_1^{0,0}=1_R:R^{0,0}\to R^{0,0}$ has no cokernel.
\end{example}

\begin{example}
\label{completefalse}
Let $T$ be the spectral sequence given by $T_0=R^{0,0}\oplus R^{0,1}$ with $d_0=1_R: R^{0,0}\to R^{0,1}$
and $T_{\geq 1}=0$.
The morphism of spectral sequences $\pi:T\to R(0,0)$ such that $\pi_0^{0,0}=1_R:R^{0,0}\to R^{0,0}$ and $\pi_i=0$ for $i>0$ has no kernel.
\end{example}

\subsection{Pullback of surjections}

\begin{defi}
A morphism of spectral sequences $f:A\to B$ is called a \textit{surjection} if the morphism $f_r$ is bidegreewise surjective for every $r\geq 0$. 
We write $\Sur$ for the class of surjective morphisms in $\spse$.
\end{defi}

\begin{lem}
\label{lem:pbsurj}
The category of spectral sequences $\spse$ admits pullbacks of surjections along any map and this preserves surjections. 
Moreover, such pullbacks are computed pagewise.
\end{lem}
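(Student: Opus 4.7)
The plan is to build the pullback pagewise and then verify that the pages assemble into a spectral sequence. Let $f \colon A \to B$ be a surjection and $g \colon C \to B$ any morphism in $\spse$. For each $r \geq 0$, form the pullback $P_r := A_r \times_{B_r} C_r$ in $\rbc$; this exists by taking the bidegreewise module pullback and inheriting $\delta_r$ componentwise from $A_r$ and $C_r$. The projection $p_C \colon P_r \to C_r$ is bidegreewise surjective because $f_r$ is, and $\ker(p_C) = \ker(f_r)$. This already yields the preservation of surjections claim and furnishes a short exact sequence $0 \to \ker f_r \to P_r \to C_r \to 0$ in $\rbc$, parallel to $0 \to \ker f_r \to A_r \to B_r \to 0$.

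The heart of the argument is to show that homology commutes with this particular pullback, i.e., that the natural map
\[H_*(P_r) \longrightarrow H_*(A_r) \times_{H_*(B_r)} H_*(C_r)\]
is an isomorphism. I would compare the long exact sequences in homology associated to the two short exact sequences above. The key observation is that $H_*(f_r)$ is surjective: via the characteristic isomorphisms $\psi_r^A, \psi_r^B$, it is identified with $f_{r+1}$, which is surjective by assumption. Hence the connecting homomorphism $H_*(B_r) \to H_{*+1}(\ker f_r)$ vanishes, and by naturality the connecting homomorphism $H_*(C_r) \to H_{*+1}(\ker f_r)$ in the other long exact sequence vanishes as well, factoring through the first. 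We obtain two short exact sequences sharing the kernel $H_*(\ker f_r)$, and since the pullback of a surjection of abelian groups is a surjection, the five lemma delivers the required isomorphism.

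With this in hand, define characteristic maps for $P$ by composing
\[\psi_r^P \colon H_*(P_r) \xrightarrow{\cong} H_*(A_r) \times_{H_*(B_r)} H_*(C_r) \xrightarrow{\psi_r^A \times \psi_r^C} A_{r+1} \times_{B_{r+1}} C_{r+1} = P_{r+1}.\]
These are isomorphisms, so $(P,\psi^P)$ is a spectral sequence, and the projections $P \to A$ and $P \to C$ are morphisms in $\spse$ by construction. The universal property follows from the pagewise universal property in each $\rbc$: given morphisms $u \colon X \to A$ and $v \colon X \to C$ in $\spse$ with $fu = gv$, the unique $\kk$-module maps into each $P_r$ commute with differentials and with the characteristic maps automatically, the latter because the whole structure is determined by the $0$-page and the characteristic maps are built from those of $A$ and $C$.

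The main obstacle is the homology pullback isomorphism of the second paragraph, since in general pullbacks do not commute with taking homology; it is the combination of the surjectivity hypothesis with the spectral sequence axiom $H_*(A_r) \cong A_{r+1}$ (and the same for $B$) that forces the relevant connecting maps to vanish and makes the five lemma applicable.
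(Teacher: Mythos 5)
Your proof is correct and follows essentially the same strategy as the paper: construct the pullback pagewise in $\rbc$ and use surjectivity on every page together with the characteristic isomorphisms to show that homology commutes with the pullback, so the pages assemble into a spectral sequence. The only difference is bookkeeping: the paper packages the pullback as the kernel of $g_m-p_m:U_m\oplus A_m\to B_m$ and reads the identification $H_*(X_m)\cong X_{m+1}$ directly off one long exact sequence, whereas you compare two parallel short exact sequences with common kernel $\ker f_r$ via naturality of the connecting map and the five lemma --- both hinge on the same vanishing of the connecting homomorphism.
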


\begin{proof} Let
 \[
\xymatrix{
&A\ar[d]^p\\
U\ar[r]_g&B
}
\]
be a diagram of spectral sequences where $p$ is a surjection. For $m\geq 0$, let $X_m$ be the pullback in the category of $m$-bigraded complexes of the $m$-page of the spectral sequence. Since the category of $m$-bigraded complexes is abelian, and $p_m$ is surjective we have a short exact sequence of $m$-bigraded complexes

\[\xymatrix{0\ar[r] &X_m\ar[r]^{}& U_m\oplus A_m\ar[r]^-{g_m-p_m}& B_m\ar[r] &0}\]
which yields a long exact sequence in homology. The map $H_*(g_m-p_m)$ is isomorphic to the map $(g_{m+1}-p_{m+1}): U_{m+1}\oplus A_{m+1}\rightarrow B_{m+1}$, hence surjective, so that $H_*(X_m)$ is isomorphic to $\Ker(g_{m+1}-p_{m+1})=X_{m+1}$. Hence the collection $X=(X_m)_{m\geq 0}$ is a spectral sequence.

We claim that this is the pullback of the diagram in the category of spectral sequences. 
 For the universal property, given maps of spectral sequences $Y\to A$, $Y\to U$ making the diagram commute, we get a unique map of $m$-bigraded complexes $f_m: Y_m \to X_m$ making the diagram of $m$-pages commute, because
$X_m$ is the pullback on the $m$-page. Noting that the forgetful functor from $m$-bigraded complexes to bigraded modules preserves pullbacks, we see that $H(f_m)\cong f_{m+1}$  so that $f=(f_m)$ is the required unique map of spectral sequences. 

Note that 
\[
X_m=\{(u,a)\,|\, g_m(u)=p_m(a), u\in U_m, a\in A_m\},
\] 
so that the induced map $\pi_1: X\rightarrow U$ is a surjection. 
\end{proof}

\begin{rmk}\label{falsepullback} The category $\spse$ does not admit general pullbacks of epimorphisms, as shown by the following proposition and example.
\end{rmk}

\begin{prop}
A morphism $f:A\rightarrow B$ of spectral sequences such that $f_0:A_0\to B_0$ is surjective is an epimorphism.
\end{prop}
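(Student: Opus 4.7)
The plan is to invoke the uniqueness principle recorded just before the statement: a morphism of spectral sequences $g:(B,\varphi)\to (C,\chi)$ is determined entirely by its $0$-page via the recursion $g_{i+1}=\chi_i H_*(g_i)\varphi_i^{-1}$. Consequently, two parallel morphisms of spectral sequences coincide as soon as they agree on the $0$-page, and the task reduces to checking that $f$ is right-cancellable on page $0$.

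Concretely, I would take $g,h:B\to C$ with $gf=hf$ in $\spse$ and pass to the $0$-page to obtain $g_0f_0=h_0f_0$ as morphisms in $\rbc$ with $r=0$. Since $f_0$ is bidegreewise surjective and the category of $0$-bigraded complexes is abelian with epimorphisms detected bidegreewise on underlying $\kk$-modules, $f_0$ is an epimorphism there, so right-cancellation gives $g_0=h_0$. Applying the recursion above inductively then yields $g_i=h_i$ for every $i\geq 0$, and hence $g=h$, proving that $f$ is an epimorphism in $\spse$.

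There is no real obstacle: the entire content is the reduction to page $0$, which is immediate from the uniqueness observation already recorded in Section~\ref{subsec:defs}. In particular one never needs to know anything about $f_r$ for $r\geq 1$, nor to verify compatibility of $g$ and $h$ beyond the $0$-page, since agreement on higher pages is forced automatically by the recursion defining a morphism of spectral sequences from its $0$-page.
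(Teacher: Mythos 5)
Your argument is correct and is essentially identical to the paper's proof: cancel $f_0$ on the $0$-page using surjectivity, then conclude $g=h$ from the fact that a morphism of spectral sequences is determined by its $0$-page. The paper's version is just a more compressed rendering of the same reasoning.
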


\begin{proof}
Let $i,j:B\rightarrow X$ be morphisms of spectral sequences such that $if=jf$. In particular, we have $i_0f_0=j_0f_0$ and since $f_0$ is surjective, we have $i_0=j_0$. Thus $i=j$.
\end{proof}

\begin{example}
Let us consider the morphism $\pi:T\rightarrow \kk(0,0)$ of Example \ref{completefalse}. It is an epimorphism because $\pi_0$ is surjective, but the pullback of $\pi$ along the map $0\rightarrow \kk(0,0)$ does not exist, because $\pi$ does not admit a kernel.
\end{example}

\section{Homotopy theory without model category structures}
\label{sec:abc}

The goal of this paper is to describe the homotopy theory of spectral sequences with respect to $E_r$-quasi-isomorphism (see Definition 
\ref{D:Erqiso}). We cannot expect to have a model category structure on the category of spectral sequences with this class of maps as the class of weak equivalences since we have 
seen that this category is neither complete nor cocomplete. 

Thus we will work with a weaker structure. Many variants are available in the literature; we will work with something close to what is known as a \emph{Brown category}.
In this section we introduce the homotopy theoretic material needed to achieve our goal.

\subsection{Almost Brown categories}

In this section we assume that the reader is familiar with the language of model categories, in particular with the notion of acyclic fibrations and fibrant objects.

\begin{defi}\label{D:ABCcat}
An \textit{almost Brown category}  is a category $\Cc$ with finite products and a final object $e$ together with two distinguished classes of 
maps called \textit{weak equivalences}
$(\Ww)$ and \textit{fibrations} ($\Fib$), satisfying the following axioms.
\begin{itemize}
\item[($A$)] Let $f$ and $g$ be composable morphisms. If any two of $f$, $g$ and $gf$ are weak equivalences, then so is the third. (That is, the class of weak equivalences satisfies the two-out-of-three property.)  All 
isomorphisms are weak equivalences.
\item[($B$)] The composite of two fibrations is a fibration. All isomorphisms are fibrations.
\item[($C$)] The pullback of an acyclic fibration along any map exists and is an acyclic fibration.
\item[($D$)]  Any morphism $u:X\rightarrow Y$ in $\Cc$ can be factored $u=pi$ with $p$ a fibration and $i$ right inverse to an acyclic 
fibration. 
\item[($E$)] Any object of $\Cc$ is fibrant.
\end{itemize}
In addition, if axiom $(D)$ holds functorially, we will say that $\Cc$ is an \emph{almost Brown category with functorial factorization}.
\end{defi}

\begin{defi}\label{D:leftexact} A functor $F:\Cc\rightarrow\Dd$ between almost Brown categories is called \textit{left exact} if it preserves finite products, the class of fibrations, the class of acyclic fibrations and pullback of acyclic fibrations.
\end{defi}

We recall that given an object $B$ of $\Cc$, a \emph{path space} for $B$ is an object $B^I$ together with maps
\[\xymatrix{B\ar[r]^\iota &B^I\ar[r]^-{(\partial_{-},\partial_{+})}& B\times B}\]
where $\iota$ is  a weak equivalence, $(\partial_{-},\partial_{+})$ a fibration and the composite is the diagonal map. 
Note that axiom $(D)$ of Definition \ref{D:ABCcat} implies the existence of a path space for any object in an almost Brown category.
\medskip

We next recall the original definition of K.S.~Brown in \cite{Brown}.

 \begin{defi}
 \label{def:bc}
A \textit{Brown category} is a category with finite products and a final object $e$ together with two distinguished classes of maps called \emph{weak equivalences} 
$(\Ww)$ and \emph{fibrations} ($\Fib$), satisfying the following axioms.
\begin{itemize}
\item[($A$)] Let $f$ and $g$ be composable morphisms. If any two of $f$, $g$ and $gf$ are weak equivalences, then so is the third.  All isomorphisms are weak equivalences.
\item[($B$)]  The composite of two fibrations is a fibration. All isomorphisms are fibrations.
\item[($C'$)] The pullback of a fibration along any map exists and is a fibration. The pullback of an acyclic fibration along any map exists and is an acyclic fibration.
\item[($D'$)] For any object $B$ there exists at least one path space $B^I$.
\item[($E$)] Every object is fibrant.
\end{itemize}
\end{defi}

\begin{rmk}\label{R:factorizationlemma}  In a Brown category $\Cc$, axiom $(D')$ is equivalent to axiom $(D)$. This is due to the factorization lemma, 
which is proved by using the axiom that the pullback of a fibration along any map exists and is a fibration. Concretely, any morphism $u:A\rightarrow B$ factorizes as
\[\xymatrix{A\ar[r]^-{(1_A,\iota u)}\ar@/_1pc/[rr]_{u}& A\times_B B^I\ar[r]^-{\partial_+\pi_2} & B}\]
where the object $A\times_B B^I$ is called the mapping path space of $u$, the first map is a weak equivalence right inverse to an acyclic fibration and the second map is a fibration.
\end{rmk}

The following corollary is a direct consequence of the remark above.

\begin{cor} A Brown category is an almost Brown category. \qed
\end{cor}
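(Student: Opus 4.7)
The plan is essentially to verify the five axioms of an almost Brown category one by one against the definition of a Brown category. Axioms $(A)$, $(B)$ and $(E)$ appear verbatim in Definitions \ref{D:ABCcat} and \ref{def:bc}, so there is nothing to check for those three. For axiom $(C)$, I would simply observe that axiom $(C')$ of a Brown category asserts that pullbacks of fibrations along arbitrary maps exist and are fibrations, \emph{and} that pullbacks of acyclic fibrations along arbitrary maps exist and are acyclic fibrations; the second half is literally axiom $(C)$, so $(C')$ implies $(C)$ without effort.

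The only substantive point is deriving axiom $(D)$ from axiom $(D')$. Here I would invoke Remark \ref{R:factorizationlemma}: given $u \colon A \to B$, I pick a path space $B^I$, which exists by $(D')$, and form the mapping path space $A\times_B B^I$ using the pullback of the fibration $(\partial_-,\partial_+) \colon B^I \to B\times B$ along $(1_A,u) \colon A \to A\times B$ (or an equivalent pullback diagram). The existence of this pullback and the fact that the resulting projection is a fibration both rely on axiom $(C')$ applied to the general (not necessarily acyclic) fibration coming from the path space. The factorization lemma then produces
\[
\xymatrix{A\ar[r]^-{(1_A,\iota u)}\ar@/_1pc/[rr]_{u}& A\times_B B^I\ar[r]^-{\partial_+\pi_2} & B,}
\]
where the first map is right inverse to an acyclic fibration (the projection onto $A$) and the second is a fibration; this is exactly axiom $(D)$.

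The main (indeed the only) obstacle is this derivation of $(D)$ from $(D')$, and it is entirely packaged in Remark \ref{R:factorizationlemma}, which is allowed to be cited at this point in the text. So once that remark is in hand, the proof reduces to a line or two noting that $(A)$, $(B)$, $(E)$ are shared, that $(C')$ implies $(C)$ by restriction to acyclic fibrations, and that $(D)$ follows from $(D')$ and $(C')$ by the factorization lemma.
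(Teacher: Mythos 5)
Your proposal is correct and follows exactly the paper's route: the paper states that the corollary is a direct consequence of Remark~\ref{R:factorizationlemma}, which is precisely the derivation of axiom $(D)$ from $(D')$ and $(C')$ via the factorization lemma that you carry out. The remaining observations --- that $(A)$, $(B)$, $(E)$ are shared verbatim and that $(C')$ contains $(C)$ --- are the same trivial checks the paper leaves implicit.
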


\begin{rmk} If $\Cc$ is a model category, then the subcategory $\Cc^f$ of fibrant objects of $\Cc$ is a Brown category. If all objects of $\Cc$ are fibrant, then it is a Brown category, hence an almost Brown category. A right Quillen functor between two model categories whose objects are all fibrant is a left exact functor in the sense of Definition \ref{D:leftexact}.
\end{rmk}

Unfortunately, in our examples we do not have all the axioms of a Brown category, since we usually do not have pullbacks of fibrations, however we have path objects and the factorization induced by them, that is the mapping path space of a morphism.

\subsection{Comparison with partial Brown categories of fibrant objects}
\label{sec:BPC}
In~\cite{Horel}, Horel introduced the notion of a partial Brown category of cofibrant objects and  in Remark 2.4 of loc.~cit.~it is noted that all the results dualize to the case of interest for us. This gives a setting for homotopy theory closely related to the one we have presented above and we compare the two here.

We start by making explicit the dual to Horel's Definition 2.2. The notation $\Cc^{[1]}$ denotes the arrow category of $\Cc$.

\begin{defi}
\label{def:PBC}
A \emph{partial Brown category of fibrant objects} is a category $\mathcal{C}$, with two subcategories $w\Cc$ and $f\Cc$ whose maps are called respectively the \textit{weak equivalences} and \textit{acyclic fibrations} such that the following axioms are satisfied.
\begin{enumerate}
\setlength{\itemindent}{-1em}
\item Both $w\Cc$ and $f\Cc$ contain the isomorphisms of $\Cc$ and $f\Cc$ is contained in $w\Cc$.
\item The weak equivalences satisfy the two-out-of-three property.
\item The pullback of an acyclic fibration along any map exists and is an acyclic fibration.
\item There are three functors $f, w, s$ from $w\Cc^{[1]}\to w\Cc^{[1]}$ such that for each weak equivalence $g$ we have
$g=f(g)\circ w(g), s(g)\circ w(g)=1$ and $f(g)$ and $s(g)$ are in $f\Cc^{[1]}$.
\end{enumerate}
\end{defi}

The following proposition is immediate.
\begin{prop} If $\Cc$ is an almost Brown category with functorial factorization, then $\Cc$ is a partial Brown category of fibrant objects. \qed
\end{prop}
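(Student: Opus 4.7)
The plan is to take $w\Cc$ to be the subcategory of $\Cc$ whose morphisms are the weak equivalences $\Ww$, and $f\Cc$ to be the subcategory whose morphisms are the acyclic fibrations (those in $\Ww\cap\Fib$), and then verify the four axioms of Definition~\ref{def:PBC} in order. Axioms (1)--(3) should be essentially immediate: for (1), isomorphisms belong to $\Ww$ by axiom (A) and to $\Fib$ by axiom (B), hence to $f\Cc$, and the inclusion $f\Cc\subseteq w\Cc$ is tautological; axiom (2) is the two-out-of-three part of axiom (A); axiom (3) is exactly axiom (C).

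The substantive step is axiom (4). Given a weak equivalence $g\colon X\to Y$, I apply the functorial factorization of axiom (D) to write $g = p_g\circ i_g$, with $p_g\colon F(g)\to Y$ a fibration and $i_g\colon X\to F(g)$ a right inverse to an acyclic fibration $q_g\colon F(g)\to X$. The crucial observation is that $p_g$ is then itself an acyclic fibration: the relation $q_g\, i_g = 1_X$ together with $q_g\in\Ww$ forces $i_g\in\Ww$ by two-out-of-three, and then $g = p_g\, i_g$ with $g, i_g\in\Ww$ forces $p_g\in\Ww$. Setting $w(g) = i_g$, $f(g) = p_g$, $s(g) = q_g$, the identities $g = f(g)\circ w(g)$ and $s(g)\circ w(g) = 1_X$ hold by construction, and both $f(g)$ and $s(g)$ are acyclic fibrations, so lie in $f\Cc^{[1]}$.

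The only remaining point is the functoriality of $w$, $f$, $s$ as functors $w\Cc^{[1]}\to w\Cc^{[1]}$, and this is precisely what the hypothesis of functorial factorization delivers: a commutative square $(a,b)\colon g\to g'$ induces a map $F(a,b)\colon F(g)\to F(g')$ compatible with the $i$'s, $p$'s and $q$'s, supplying the required morphisms $w(g)\to w(g')$, $f(g)\to f(g')$ and $s(g)\to s(g')$. I do not anticipate any real obstacle: the proposition is a direct unpacking of the definitions, which is consistent with the $\qed$ appended by the authors to the statement.
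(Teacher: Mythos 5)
Your proposal is correct and follows the same (direct) route the paper intends: the paper states the proposition as immediate with no written proof, and your verification — axioms (1)--(3) from (A)--(C), and axiom (4) from the functorial factorization of (D) together with the two-out-of-three argument showing $p_g$ is acyclic — is exactly the unpacking being left to the reader. The only detail worth adding is that the induced squares are morphisms of $w\Cc^{[1]}$ because $F(a,b)$ is itself a weak equivalence, again by two-out-of-three.
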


\begin{defi}
\label{defi:leftexactPBC}
Let $\Cc$ and $\Dd$ be partial Brown categories of fibrant objects. A functor $\Cc\to \Dd$ is called \emph{left exact} if it 
preserves weak equivalences, acyclic fibrations and pullbacks of acyclic fibrations.
\end{defi}

\begin{prop}
If  $\Cc$ and $\Dd$ are almost Brown categories with functorial factorization, then a left exact functor $F:\Cc\to \Dd$ is also
 left exact as a functor
of partial Brown categories.
\end{prop}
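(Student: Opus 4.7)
The plan is to check that a left exact functor $F\colon\Cc\to\Dd$ between almost Brown categories with functorial factorization, in the sense of Definition~\ref{D:leftexact}, satisfies the three conditions of Definition~\ref{defi:leftexactPBC}. Two of these are immediate: preservation of acyclic fibrations and preservation of pullbacks of acyclic fibrations are part of the very definition of left exactness in the almost Brown setting. The only substantive task is to show that $F$ preserves weak equivalences, which is not explicitly required by Definition~\ref{D:leftexact}.

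The main step is to exploit axiom $(D)$. Given a weak equivalence $g\colon X\to Y$ in $\Cc$, factor it as $g=pi$ where $p$ is a fibration and $i\colon X\to Z$ is right inverse to an acyclic fibration $q\colon Z\to X$, so that $q\circ i=1_X$. Applying the two-out-of-three axiom $(A)$ twice: first to $q\circ i=1_X$ to deduce that $i$ is a weak equivalence (since $q$ and $1_X$ are), and then to $g=p\circ i$ to deduce that $p$ is a weak equivalence. Thus $p$ is in fact an acyclic fibration.

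Now apply $F$. Since $F$ preserves acyclic fibrations, both $F(p)$ and $F(q)$ are acyclic fibrations in $\Dd$, and in particular weak equivalences. From $F(q)\circ F(i)=F(q\circ i)=1_{F(X)}$, axiom $(A)$ in $\Dd$ yields that $F(i)$ is a weak equivalence. Finally, $F(g)=F(p)\circ F(i)$ is a composite of two weak equivalences, hence a weak equivalence (composability of $\Ww$ is a consequence of two-out-of-three). This shows $F$ preserves weak equivalences.

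There is no real obstacle here; the argument is a short 2-out-of-3 chase once one notices that the factorization provided by axiom $(D)$ converts an arbitrary weak equivalence into a composite of an acyclic fibration and a section of an acyclic fibration, both of which are preserved by $F$ for essentially formal reasons. Note that only the existence of the factorization is used, not its functoriality; the functorial factorization hypothesis enters the partial Brown category structure through the previous proposition, not in this comparison of morphisms.
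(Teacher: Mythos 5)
Your argument is correct and is essentially the paper's own proof: both factor the weak equivalence via axiom $(D)$, observe by two-out-of-three that the fibration factor is in fact an acyclic fibration, and conclude that $F$ sends both factors to weak equivalences since it preserves acyclic fibrations. Your write-up just makes explicit the two-out-of-three steps that the paper leaves implicit.
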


\begin{proof}
We need to check that $F$ preserves weak equivalences.
Let $u$ be a weak equivalence and factorize this as $u=pi$ with $p$ an acyclic fibration and $i$ right inverse to an acyclic fibration. 
Then $F(u)=F(p)F(i)$ and since $F$ preserves acyclic fibrations, $F(p)$ is an acyclic fibration and $F(i)$ is  right inverse to an acyclic fibration. This 
implies that $F(i)$ is a weak equivalence and thus so is $F(u)$. 
\end{proof}

\section{Almost Brown category structures on spectral sequences}
\label{sec:main results}

In this section we again fix an integer $r\geq 0$.

\subsection{$E_r$-quasi-isomorphisms and $r$-fibrations}

\begin{defi}\label{D:Erqiso}
A morphism of spectral sequences $f:A\to B$ is called an \textit{$E_r$-quasi-isomorphism} if
the morphism $f_r:A_r\to B_r$ is a quasi-isomorphism of $r$-bigraded complexes, or equivalently if the morphisms $f_k$ are isomorphisms for $k>r$.

A morphism of spectral sequences $f:A\to B$ is called an \emph{$r$-fibration} if the morphisms $f_k$ are surjective for $0\leq k\leq r$.
\end{defi}

We denote by  $\Ee_r$ the class of $E_r$-quasi-isomorphisms of $\spse$. This class
contains all isomorphisms of $\spse$ and
satisfies the two-out-of-three property. 

We denote by $\Fib_r$ the class of $r$-fibrations of $\spse$. This class contains all isomorphisms and is stable under composition.

Note that acyclic fibrations are those maps that are surjective at the $k$-page of the spectral sequence for $k\leq r$ and isomorphisms for $k>r$. In particular the class of acyclic fibrations coincides with the class of surjective $E_r$-quasi-isomorphisms, that is
$\Ee_{r}\cap \Fib_r =\Ee_r\cap \cap_s \Fib_s=\Ee_r\cap \Sur$.

It is clear that we have inclusions:
\[ \Ee_{r}\subset \Ee_{r+1}, \qquad \Fib_{r+1}\subset \Fib_r\qquad \text{and} 
\qquad \Ee_{r}\cap \Fib_r \subset \Ee_{r+1}\cap \Fib_{r+1}
\]
for all $r\geq 0$.

\subsection{Mapping path space construction for spectral sequences}\label{S:mapping}

In this section we define a functorial $r$-path and an explicit $r$-mapping path space for any morphism in  the category of spectral sequences.

\begin{defi}\label{def:lambda}
Let $\Lambda_r$ be the spectral sequence $Re_-\oplus Re_+\oplus Ru$ where $e_{\pm}$ are in bidegree $(0,0)$ and $u$ is in bidegree
$(-r,1-r)$, with all differentials zero except at the $r$-page of the spectral sequence where $d_r(e_-)=-u$, $d_r(e_+)=u$. The $r+1$-page of the spectral sequence is then concentrated in bidegree $(0,0)$ with a single $R$-module, free of rank 1, generated by $e_++e_-$.
\end{defi}

Note that we can consider $\Lambda_r\otimes A$ for any spectral sequence $A$ and that this is again a spectral sequence.
\medskip

We next define a collection of functorial paths indexed by an integer $r\geq 0$ on the category of spectral sequences,
giving rise to the corresponding notions of $r$-homotopy.

\begin{defi}
\label{def:rpathob}
The \textit{$r$-path of a spectral sequence $A$} is the spectral sequence $P(r;A)= \Lambda_r\otimes A$. Explicitly, the pages of the spectral sequence $P(r;A)$ are given by

\[P_m(r;A)^{p,q}:=\begin{cases} A_m^{p,q}\oplus A_m^{p+r,q+r-1}\oplus A_m^{p,q},&\text{if } 0\leq m\leq r\\
A_m^{p,q},&\text {if } m>r\end{cases}\]
with the differentials $D_m:P_m(r;A)\to P_m(r;A)$ of bidegree $(-m,1-m)$ given by
\[D_m:=\left(
\begin{matrix}
 d_m&0&0\\
 0&(-1)^{m+r+1}d_m&0\\
 0&0&d_m
\end{matrix}
\right)\text{ for }m< r,\ 
D_r:=\left(
\begin{matrix}
 d_r&0&0\\
 -1&-d_r&1\\
 0&0&d_r
\end{matrix}
\right)\]
and $D_m=d_m$ for $m>r$.
\end{defi}

We have a factorisation of the diagonal map
\[\xymatrix{\kk\ar[r]^-{\iota}&\Lambda_r \ar[r]^-{(\partial_-, \partial_+)}&\kk \times \kk}\]
and thus morphisms of spectral sequences
\[\xymatrix{A\ar[r]^-{\iota_A}&P(r;A) \ar@<1ex>[r]^{\partial^+_A} \ar@<-1ex>[r]_{\partial^-_A}&A}\,\,\,;\,\,\, \partial^\pm_A\circ \iota_A=1_A,\]
given by $\partial^-_A(x,y,z)=x$, $\partial^+_A(x,y,z)=z$ and $\iota_A(x)=(x,0,x)$ on the $m$-page of the spectral sequence for $m\leq r$ and by the identity maps for $m>r$. 
We will often omit the subscripts of these maps when there is no danger of confusion.

The use of the term $r$-path is justified below. In particular, 
  $\iota_A$ is an $E_r$-quasi-isomorphism. Furthermore $(\partial_A^+,\partial_A^-): P(r;A)\rightarrow A\times A$ is an $r$-fibration. In addition $\partial_A^+$ and $\partial_A^-$ are acyclic $r$-fibrations.

\begin{defi}
\label{def:rpathmor}
The \textit{$r$-path of a morphism} $f:(A,d_m^A)\to (B,d_m^B)$ of spectral sequences
is the morphism of spectral sequences $P(r;f):(P(r;A),D_m^A)\to (P(r;B),D_m^B)$ given by
\[P(r;f)_m:=(f_m, (-1)^{m}f_m,f_m),\]
for $m\leq r$ and $P(r;f)=f_m$ for $m>r$.
\end{defi}
The above definitions give rise to a functorial path $P(r;-):\spse\to \spse$ in the category of spectral sequences.

We would like to use this for the factorization of any morphism $u:A\rightarrow B$ of spectral sequences in the spirit of Remark \ref{R:factorizationlemma}.
We remark that the $r$-mapping path space   $\overline{P}(r;u):=A\times_B P(r;B)$ of $u$  exists and takes  the following form
 \[\overline{P}(r;u)_m^{p,q}:= \begin{cases}  A_m^{p,q}\oplus B_m^{p+r,q+r-1}\oplus B_m^{p,q},&\text{if } 0\leq m\leq r,\\
A_m^{p,q},&\text {if } m>r,\end{cases}\]
with differentials $D_m:\overline{P}(r;u)_m\to \overline{P}(r;u)_m$ of bidegree $(-m,-m+1)$ given by
\[D_m=\left(
\begin{matrix}
 d_m^A&0&0\\
 0&(-1)^{m+r+1}d_m^B&0\\
 0&0&d_m^B
\end{matrix}
\right)\text{ for }m< r,\quad
D_r=\left(
\begin{matrix}
 d^A_r&0&0\\
 -u_r&-d^B_r&1_B\\
 0&0&d^B_r
\end{matrix}
\right)
\]
and $D_m$ is induced by $d^A_m$ for $m>r$.
The factorization of $u$ as
\[\xymatrix{A\ar[r]^-i & \overline{P}(r;u)\ar[r]^-p & B}\]
takes the following form
\begin{align*}
i_m(a)&=\begin{cases} (a,0,u_m(a)), &\text{if } 0\leq m\leq r,\\
a, &\text {if } m>r\end{cases} \quad\text{and}\\
p_m(a,b',b)&= \begin{cases} b, &\text{if } 0\leq m\leq r,\\
u_m(a), &\text {if } m>r.\end{cases}
\end{align*}
The map $i$ is right inverse to an acyclic fibration, namely  the projection of $\overline{P}(r;u)$ onto $A$.
It is clear from the formulas that $p$ is an $r$-fibration and that the factorization is functorial.

\subsection{Homotopy theory of spectral sequences}

\begin{teo}\label{T:fund} The category of spectral sequences together with the class $\Ee_r$ of $E_r$-quasi-isomorphisms and the class $\Fib_r$ of $r$-fibrations is an almost Brown category with functorial factorization. Hence it is a partial Brown category of fibrant objects.
\end{teo}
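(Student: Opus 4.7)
The plan is to verify the five axioms of Definition \ref{D:ABCcat} together with functoriality of the factorization. Most of the technical work has already been done in Lemma \ref{lem:pbsurj} and the mapping path space construction of Section \ref{S:mapping}, so the theorem is largely an assembly.

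First I would note that axioms $(A)$ and $(B)$ have already been recorded in the discussion following Definition \ref{D:Erqiso}: the class $\Ee_r$ contains all isomorphisms and satisfies two-out-of-three, and $\Fib_r$ contains all isomorphisms and is closed under composition. Finite products in $\spse$ can be computed pagewise in $\rbc$, with characteristic maps induced by the universal property, and the final object is the zero spectral sequence. Since the unique morphism $X \to 0$ is bidegreewise surjective on every page, it lies in $\Fib_r$, so every object is fibrant and axiom $(E)$ holds.

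For axiom $(C)$, I would apply Lemma \ref{lem:pbsurj} to an acyclic $r$-fibration $p \colon A \to B$, which is in particular surjective, and an arbitrary morphism $g \colon U \to B$. The pullback $X$ then exists and is computed pagewise, and the projection $X \to U$ is surjective on every page, hence an $r$-fibration. To see that $X \to U$ is also an $E_r$-quasi-isomorphism, recall that $p_k$ is an isomorphism for $k > r$; pagewise pullbacks of isomorphisms are isomorphisms, so $X_k \to U_k$ is an isomorphism for all $k > r$, as required.

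For axiom $(D)$ with functoriality, I would invoke the explicit functorial $r$-mapping path space construction $\overline{P}(r;u)$ of Section \ref{S:mapping}, which factors any morphism $u \colon A \to B$ as $A \xrightarrow{i} \overline{P}(r;u) \xrightarrow{p} B$ with $p$ an $r$-fibration and $i$ right inverse to the acyclic $r$-fibration $\overline{P}(r;u) \to A$. The conclusion that $\spse$ is then a partial Brown category of fibrant objects follows immediately from the proposition recorded in Section \ref{sec:BPC}. There is no genuine obstacle once Lemma \ref{lem:pbsurj} and the mapping path space have been set up; the main sanity check is simply that the explicit formulas in Section \ref{S:mapping} really do produce $r$-fibrations and acyclic $r$-fibrations as claimed.
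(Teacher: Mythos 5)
Your proposal is correct and follows essentially the same route as the paper: axioms $(A)$, $(B)$, $(E)$ are immediate, axiom $(C)$ comes from Lemma~\ref{lem:pbsurj}, and axiom $(D)$ from the functorial mapping path space of Section~\ref{S:mapping}. The only (harmless) variation is in checking acyclicity of the pullback projection: the paper reuses the long exact sequence argument from the proof of Lemma~\ref{lem:pbsurj} to see that $(\pi_1)_r$ is a quasi-isomorphism, whereas you observe directly that the pagewise pullback of the isomorphisms $p_k$ for $k>r$ yields isomorphisms $X_k\to U_k$, which is equally valid by the equivalence recorded in Definition~\ref{D:Erqiso}.
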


\begin{proof} Axioms $(A), (B)$ and $(E)$ are clearly satisfied. Let us show axiom $(C)$. Let $p:C\rightarrow B$ be an acyclic $r$-fibration and let $u: A\rightarrow B$ be  a morphism in $\spse$. Any acyclic $r$-fibration is surjective, and by Lemma \ref{lem:pbsurj}, the pullback of $p$ along $u$ exists and is the spectral sequence $X$ whose $m$-page is described by $X_m=\{(a,c)\in A_m\times C_m)\, |\, u_m(a)=p_m(b)\}$ with induced map $\pi_1:X\rightarrow A$  the projection to the first factor. The proof of that lemma shows that if $p_r$ is a quasi-isomorphism so is the $r$-page  $(\pi_1)_r$ of $\pi_1$.  Axiom $(D)$ follows from the mapping
path space construction of Section \ref{S:mapping}. \end{proof}

\begin{notation}
\label{nota:spse_r}
We write $(\spse)_r$ for the  almost Brown category of spectral sequences with the structure specified in Theorem~\ref{T:fund}.
\end{notation}

Recall that we have inclusions:
\[ \Ee_{r}\subset \Ee_{r+1}, \qquad \Fib_{r+1}\subset \Fib_r\qquad \text{and} 
\qquad \Ee_{r}\cap \Fib_r \subset \Ee_{r+1}\cap \Fib_{r+1}
\]
for all $r\geq 0$.
Thus, for $r\leq s$, the identity functor $\Id:(\spse)_r \to (\spse)_s$ preserves weak equivalences and acyclic fibrations, but not fibrations.
Therefore it is not left exact as a functor of almost Brown categories, but it is left exact when viewed as a functor of the corresponding partial Brown 
categories.

\subsection{$r$-homotopies}\label{S:r-homotopy}

As in classical homotopy theory, the functorial $r$-path $P(r;-):\spse\to \spse$ yields  a natural notion of homotopy: for $f,g:A\to B$  two morphisms of spectral sequences, an \textit{$r$-homotopy from $f$ to $g$} is given by a morphism 
of spectral sequences $h:A\to P(r;B)$ such that $\partial^-_B\circ h=f$ and $\partial^+_B\circ h=g$.
We use the notation $h:f\simr{r} g$.  An \textit{$r$-homotopy equivalence} is a morphism of spectral sequences $f:A\to B$ such that there exists a morphism $g:B\to A$ satisfying $f\circ g\simr{r} 1_B$ and $g\circ f\simr{r} 1_A$.

\begin{prop}\label{equivrelTC}
The notion of $r$-homotopy defines an equivalence relation on the set of morphisms between two given spectral sequences,
which is compatible with the composition.
\end{prop}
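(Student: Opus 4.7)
The plan is to verify each property of an equivalence relation separately, then deduce compatibility with composition by functoriality. Reflexivity $f \simr{r} f$ is witnessed by $\iota_B \circ f$, since $\partial^\pm_B \iota_B = 1_B$. For symmetry, I introduce an involution $\sigma_B : P(r;B) \to P(r;B)$ that interchanges the two endpoints: on pages $m \leq r$ set $\sigma_{B,m}(x,y,z) = (z, -y, x)$, and take $\sigma_{B,m}$ to be the identity for $m > r$. The sign on the middle component is exactly what is needed for $\sigma_B$ to commute with the $r$-page differential $D_r$, a short matrix computation; on lower pages the differentials are diagonal and compatibility is immediate. Since $\partial^\pm_B \sigma_B = \partial^\mp_B$ by construction, any $h : f \simr{r} g$ yields $\sigma_B \circ h : g \simr{r} f$.

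For transitivity I need to concatenate two paths. The map $\partial^+_B : P(r;B) \to B$ is an acyclic $r$-fibration, hence a surjection, so by Lemma~\ref{lem:pbsurj} the pullback $Q := P(r;B) \times_B P(r;B)$ along $\partial^-_B$ exists in $\spse$, is computed pagewise, and on page $m \leq r$ consists of pairs $((x_1, y_1, z_1), (x_2, y_2, z_2))$ subject to $z_1 = x_2$. Define a concatenation $\mu_B : Q \to P(r;B)$ by
\[ \mu_{B,m}\bigl((x_1,y_1,z_1),(x_2,y_2,z_2)\bigr) := (x_1,\, y_1 + y_2,\, z_2) \]
for $m \leq r$ and by the identity for $m > r$. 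Given $h : f \simr{r} g$ and $k : g \simr{r} l$, the identity $\partial^+_B h = g = \partial^-_B k$ supplies a morphism $(h,k) : A \to Q$ by the universal property, and $\mu_B \circ (h, k)$ is an $r$-homotopy $f \simr{r} l$.

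Compatibility with composition follows from functoriality: pre-composition of a homotopy $h : f \simr{r} g$ with $\alpha : A' \to A$ gives $h \circ \alpha : f\alpha \simr{r} g\alpha$, while post-composition by $\beta : B \to B'$ is obtained via the functorial $r$-path $P(r;\beta) : P(r;B) \to P(r;B')$ of Definition~\ref{def:rpathmor}, which satisfies $\partial^\pm_{B'} P(r;\beta) = \beta \partial^\pm_B$. Combining these with transitivity gives $f_2 f_1 \simr{r} g_2 g_1$ whenever $f_i \simr{r} g_i$ for composable pairs.

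The main obstacle is the verification that the concatenation $\mu_B$ is a morphism of spectral sequences. The only non-trivial case is the $r$-page, where chain-map compatibility reduces to the identity
\[ -x_1 - d_r(y_1+y_2) + z_2 = (-x_1 - d_r y_1 + z_1) + (-x_2 - d_r y_2 + z_2), \]
which holds precisely because the pullback $Q$ imposes $z_1 = x_2$. This is why the pullback, rather than the naive product $P(r;B) \times P(r;B)$, is essential for the construction; without the matching constraint the formula $y_1 + y_2$ fails to yield a chain map on the $r$-page.
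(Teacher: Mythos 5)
Your proof is correct, but it takes a genuinely different route from the paper. The paper's proof unravels the definition: a morphism $h:A\to P(r;B)$ over $(f,g)$ is the same thing as a collection of bidegree-$(r,r-1)$ maps $\widehat h_m:A_m\to B_m$ (the middle components of $h_m$) satisfying $(-1)^{m+r+1}d_m^B\widehat h_m-\widehat h_md_m^A=0$ for $m<r$, $-d_r^B\widehat h_r-\widehat h_rd_r^A=f_r-g_r$, and $H_*(\widehat h_m)=\widehat h_{m+1}$; reflexivity, symmetry, transitivity and compatibility with composition then follow from linearity of these conditions ($0$, $-\widehat h$, $\widehat h+\widehat k$, and pre/post-composition). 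You instead work structurally on the path object, building a reversal $\sigma_B$ and a concatenation $\mu_B$ defined on the pullback $Q=P(r;B)\times_BP(r;B)$, whose existence you correctly extract from Lemma~\ref{lem:pbsurj} since $\partial^\pm_B$ are surjections. The two arguments are really the same computation in different clothing: your $\sigma_B$ and $\mu_B$ act on middle components by $y\mapsto -y$ and $(y_1,y_2)\mapsto y_1+y_2$, and your $r$-page identity for $\mu_B$ is exactly the additivity of the paper's homotopy equation. Your version is longer but exhibits more structure on $P(r;-)$ (a strict reversal and concatenation), which could be reused elsewhere; the paper's version is shorter and makes the analogy with ordinary chain homotopies explicit, which it exploits again in the proof of Proposition~\ref{QuotientcatTC} and in the comparisons of Section~\ref{sec:comparisons}. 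The only point you leave unverified is that $\sigma_B$ and $\mu_B$ are compatible with the characteristic maps, i.e.\ that the induced maps on $H_*$ of the $r$-page agree with the identity of $B_{r+1}$; this does hold (for instance, $\partial^-_B\sigma_B=\partial^+_B$ and $H_*((\partial^-_B)_r)=H_*((\partial^+_B)_r)$ is an isomorphism, forcing $H_*((\sigma_B)_r)=\mathrm{id}$, and similarly for $\mu_B$ using $\mu_B\circ(\iota_B,\iota_B)=\iota_B$), but it should be said, since being a chain map on each page is not by itself enough to be a morphism of spectral sequences.
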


\begin{proof} Unravelling the definition we have that if $f,g:(A,d_m^A)\to (B,d_m^B)$ are two morphisms of spectral sequences, then $f\simr{r} g$ if and only if there exists
a collection of morphisms $\widehat h_m:A_m\to B_m$ of bigraded modules, of
bidegree $(r,r-1)$, for all $0\leq m\leq r$, satisfying
\[
\begin{cases}
(-1)^{m+r+1}d_m^B \widehat h_m-\widehat h_m d_m^A =0,& \text{ if } 0\leq m<r, \\
-d_r^B   \widehat h_r-\widehat h_r d_r^A =f_r-g_r,&
\end{cases}
\]
and $H_*(\widehat h_m)=\widehat h_{m+1}$ for $0\leq m<r$.
The proposition then follows.
\end{proof}

Denote by $\Ss_r$ the class of $r$-homotopy equivalences of $\spse$. 
This class is closed under composition and contains all isomorphisms. 
In addition, we have $\Ss_r\subseteq \Ss_{r+1}$ and $\Ss_r\subseteq \Ee_r$, for all $r\geq 0$.

\begin{prop}\label{QuotientcatTC}
The localized category $\spse[\Ss_r^{-1}]$
is canonically isomorphic to the quotient category  
$\spse/\!\!\simr{r}$.
\end{prop}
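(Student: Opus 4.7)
My plan is to construct mutually inverse functors between $\spse[\Ss_r^{-1}]$ and $\spse/\!\!\simr{r}$ using the two universal properties. The easy direction is that the quotient functor $\pi:\spse\to\spse/\!\!\simr{r}$ sends every $r$-homotopy equivalence to an isomorphism in the quotient, since $fg\simr{r}1$ and $gf\simr{r}1$ imply $[f][g]=[g][f]=1$. By the universal property of the localization $L:\spse\to\spse[\Ss_r^{-1}]$ this gives a unique functor $\bar{\pi}:\spse[\Ss_r^{-1}]\to\spse/\!\!\simr{r}$ with $\bar{\pi}\circ L=\pi$.

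For the reverse direction, I would show that $L$ identifies $r$-homotopic morphisms, so that the universal property of the quotient yields a unique $\bar{L}:\spse/\!\!\simr{r}\to\spse[\Ss_r^{-1}]$ with $\bar{L}\circ\pi=L$. Given an $r$-homotopy $f\simr{r}g$ realised by some $h:A\to P(r;B)$, one has $f=\partial^-_B h$ and $g=\partial^+_B h$, so it suffices to prove $L(\partial^-_B)=L(\partial^+_B)$ for every $B$. The cleanest route is to show that $\iota_B:B\to P(r;B)$ is itself an $r$-homotopy equivalence; then $L(\iota_B)$ is invertible in $\spse[\Ss_r^{-1}]$, and the equations $\partial^\pm_B\circ\iota_B=1_B$ force $L(\partial^-_B)=L(\iota_B)^{-1}=L(\partial^+_B)$.

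The main obstacle is therefore to verify $\iota_B\in\Ss_r$. The composite $\partial^-_B\iota_B=1_B$ is trivial, so the real work is to exhibit an explicit $r$-homotopy between $\iota_B\circ\partial^-_B$ and $1_{P(r;B)}$. Using the characterisation in Proposition~\ref{equivrelTC}, I would take the uniform formula $\widehat{h}_m(x,y,z)=(0,0,-y)$ as a bigraded map of bidegree $(r,r-1)$ for every $0\leq m\leq r$. For $m<r$ the differential $D_m$ is diagonal up to the sign $(-1)^{m+r+1}$ on the middle factor, so the commutation relation $(-1)^{m+r+1}D_m\widehat{h}_m=\widehat{h}_m D_m$ reduces to a short sign check, and the fact that $\widehat{h}$ is given by the same formula on every page makes the compatibility $H_*(\widehat{h}_m)=\widehat{h}_{m+1}$ automatic. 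A direct matrix computation using the explicit form of $D_r$ then gives $-D_r\widehat{h}_r-\widehat{h}_r D_r=1_{P_r(r;B)}-\iota_B\partial^-_B$, which is exactly the condition required at the critical $r$-page; a symmetric formula handles the composite involving $\partial^+_B$.

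With $\bar{\pi}$ and $\bar{L}$ in hand, the identities $\bar{\pi}\bar{L}=\Id$ and $\bar{L}\bar{\pi}=\Id$ follow from the uniqueness clauses in the universal properties, since $\bar{\pi}\bar{L}\pi=\pi$ and $\bar{L}\bar{\pi}L=L$ both hold by construction. All the categorical bookkeeping is formal; the only substantive step is the explicit homotopy computation that establishes $\iota_B$ as an $r$-homotopy equivalence.
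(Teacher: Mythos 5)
Your proposal is correct and follows essentially the same route as the paper: the paper also treats the categorical bookkeeping as classical and isolates the same key point, namely that $\iota_A:A\to P(r;A)$ is an $r$-homotopy equivalence, which it verifies with exactly your homotopy $\widehat h_m(x,y,z)=(0,0,-y)$ yielding $-D_r\widehat h_r-\widehat h_rD_r=1_{P(r;A)}-\iota_A\partial_A^-$. (Your final ``symmetric formula'' for $\partial^+$ is not needed, since invertibility of $L(\iota_B)$ together with $\partial^\pm_B\iota_B=1_B$ already forces $L(\partial^-_B)=L(\partial^+_B)$, as you yourself note.)
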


\begin{proof} The proof is classical, and requires that the morphism from a spectral sequence $A$ to its path space $\iota_A:(A,d_m)\lra (P(r;A),D_m)$, is not only an $E_r$-quasi-isomorphism but also an $r$-homotopy equivalence. We prove this statement.
Recall that $(\iota_A)_m(x)=(x,0,x)$ for $m\leq r$ and $(\iota_A)_m=1_A$ for $m>r$. 
Since $\partial_A^-\iota_A=1_{A}$, it suffices to define an $r$-homotopy from $1_{P(r;A)}$ to
$\iota_A\partial_A^-$.
Consider the morphism $\widehat h_m:P(r;A)\to P(r;A)$ of bidegree $(r,r-1)$ defined by
$\widehat h_m(x,y,z)=(0,0,-y)$ for $0\leq m\leq r$.
It is clear that for $m<r$ we have $(-1)^{m+r-1}D_m\widehat h_m-\widehat h_mD_m=0$, $H_*(\widehat h_m)=\widehat h_{m+1}$ and
\[
(-D_r\widehat h_r-\widehat h_rD_r)(x,y,z)=(0,y,d_ry)+(0,0,-d_ry-x+z)=(x,y,z)-(x,0,x).
\]
As in the proof of Proposition \ref{equivrelTC} this implies that $\iota_A$ is an $r$-homotopy equivalence.
\end{proof}

In section~\ref{sec:comparisons}, we will compare this notion of homotopy for spectral sequences with notions for filtered complexes and multicomplexes.

\subsection{Generation of $r$-fibrations and acyclic $r$-fibrations}

This section is devoted to the description of $r$-fibrations and acyclic $r$-fibrations as maps having the right lifting property with respect to a set of morphisms in $\spse$.
We adopt the language of model categories. For $I$ a class of maps in $\spse$, we say that a morphism of spectral sequences $f$ is $I$-injective if it has the right lifting property with respect to $I$.

To describe the generating sets, we first introduce some basic objects.

\begin{defi}\label{def:discs} 
Let $p,n\in\ZZ$. For all $r\geq 0$, let $\Dd_r(p,n)$ be the spectral sequence defined as follows:
\[
\begin{cases}
\Dd_r(p,n)_i=\kk^{p,n}\oplus \kk^{p-r,n+1-r}, &d_i=0, \text{ for } 0\leq i<r, \\
\Dd_r(p,n)_r=\kk^{p,n}\stackrel{1}{\lra}\kk^{p-r,n+1-r},\\
\Dd_r(p,n)_i=0, &\text{for } i>r.
\end{cases}
\]

For all $r\geq 1$ define
\[\Ss_r(p,n):=\Dd_{r-1}(p-1,n-1)\oplus \Dd_{r-1}(p+r-1,n+r-2)\]

For all $r\geq 1$ define a morphism of spectral sequences
\[\varphi_r: \Dd_{r}(p,n)\lra \Ss_{r}(p,n)\]
via the identity on $\kk$ whenever it is bigradedly defined.
\end{defi}

\begin{defi} Let $(A,\varphi)$ be a spectral sequence. 
\begin{enumerate}
\item A sequence of elements $(a_0^{p,n},\ldots,a_{m+1}^{p,n})$ with $a_i^{p,n}\in A_i^{p,n}$ is said to be 
\emph{compatible} if for every $0\leq i\leq m$, $d_ia_i^{p,n}=0$ and $a_{i+1}^{p,n}=\varphi_i([a_i^{p,n}])$ where $[a_i^{p,n}]$ is the class of $a_i^{p,n}$ in $H_*(A_i)$.
\item Denote by $D_r^{p,n}(A)$ the $\kk$-submodule of $A^{\times (2r+2)}$ consisting of pairs
\[(a_0^{p,n},\ldots,a_r^{p,n});(b_0^{p-r,n+1-r},\ldots,b_r^{p-r,n+1-r})\] of compatible sequences satisfying $d_ra_r^{p,n}=b_r^{p-r,n+1-r}$. This yields a functor, denoted by $D_r^{p,n} : \spse\rightarrow \modu$.
\end{enumerate}
\end{defi}
The following proposition is a direct consequence of the definitions.

\begin{prop}\label{P:morphismfromdisks} Let $(A,\varphi)$ be a spectral sequence.
\begin{enumerate}
\item There is a one-to-one correspondence between infinite compatible sequences $(a_0^{p,n},\ldots,a_m^{p,n},\ldots)$ and morphisms of spectral sequences $R(p,n)\rightarrow A$.
\item We have $D_r^{p,n}=\Hom_{\spse}(\Dd_r(p,n),-)$, that is, $D_r^{p,n}$ is represented by $\Dd_r(p,n)$.\qed
\end{enumerate} 
\end{prop}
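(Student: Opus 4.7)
The plan is to unravel the definition of a morphism of spectral sequences in each case and check that the data reduces exactly to the stated compatible sequence conditions, with no further constraints.

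For part (1), I would note that $R(p,n)_i = R^{p,n}$ on every page with zero differential, and the characteristic map $\psi_i \colon H_*(R(p,n)_i) \to R(p,n)_{i+1}$ is the identity on $R$. Thus a morphism $f \colon R(p,n) \to A$ is specified by a collection of maps $f_i \colon R \to A_i^{p,n}$, i.e.\ by the elements $a_i^{p,n} := f_i(1) \in A_i^{p,n}$. Commutation with differentials (which are zero on $R(p,n)$) forces $d_i a_i^{p,n} = 0$, and commutation with characteristic maps yields $\varphi_i([a_i^{p,n}]) = a_{i+1}^{p,n}$. Conversely, any infinite compatible sequence defines a morphism by these formulas, so we get the stated bijection.

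For part (2), I would first verify that $\Dd_r(p,n)$ is well-defined as a spectral sequence. For $0 \leq i < r$ the differential $d_i$ has bidegree $(-i,1-i)$, and on $\Dd_r(p,n)_i$ the only nonzero bidegrees are $(p,n)$ and $(p-r,n+1-r)$, which are too far apart for a nonzero $d_i$; hence $d_i = 0$ is forced and $H_*(\Dd_r(p,n)_i) \cong \Dd_r(p,n)_{i+1}$ canonically. At page $r$, $d_r$ is the identity, so $H_*(\Dd_r(p,n)_r) = 0 = \Dd_r(p,n)_{r+1}$, matching via the zero characteristic map. A morphism $f \colon \Dd_r(p,n) \to A$ is then specified by the images of the two generators on each of the pages $0 \leq i \leq r$, giving elements $a_i \in A_i^{p,n}$ and $b_i \in A_i^{p-r,n+1-r}$. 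Commuting with the differentials on pages $i < r$ forces $d_i a_i = d_i b_i = 0$, while on page $r$ it forces $d_r a_r = b_r$; commuting with characteristic maps yields $a_{i+1} = \varphi_i([a_i])$ and $b_{i+1} = \varphi_i([b_i])$ for $0 \leq i < r$. For pages beyond $r$ both sides vanish, so there is nothing to check.

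This shows that $\Hom_{\spse}(\Dd_r(p,n), A)$ is in natural bijection with $D_r^{p,n}(A)$. The only conceptual step is the bidegree argument that forces the differentials on pages $i < r$ of $\Dd_r(p,n)$ to vanish and confirms that $\Dd_r(p,n)$ is a genuine spectral sequence; this is the main (very mild) obstacle, and both verifications are immediate. The proposition then follows directly by inspecting definitions, as indicated.
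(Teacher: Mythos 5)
Your proof is correct and is exactly the unwinding of definitions that the paper intends: the paper states this proposition without proof, calling it a direct consequence of the definitions, and your verification (including the bidegree argument forcing the differentials of $\Dd_r(p,n)$ to vanish on pages below $r$) fills in precisely those routine checks. Nothing is missing and no different route is taken.
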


\begin{defi}
Let $I_r$ and $J_r$ be the sets of morphisms of $\spse$ given by 
\[I_r:=\left\{\varphi_{r+1}:\Dd_{r+1}(p,n)\lra \Ss_{r+1}(p,n)\right\}_{p,n\in\ZZ}\text{ and }
J_r:=\left\{0\lra \Dd_{r}(p,n)\right\}_{p,n\in\ZZ}.\]

Let $I'_r$ and $J_r'$ be the sets of morphisms of $\spse$ given by 
\[I_r':=\cup_{k=0}^{r-1} J_k\cup I_r\text{ and }
J_r':=\cup_{k=0}^r J_k.\]

\end{defi}

\begin{prop}\label{J_r}A morphism of spectral sequences is an $r$-fibration if and only if  it has the right lifting property with respect to $J'_r$.
\end{prop}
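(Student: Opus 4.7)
The plan is to reformulate the right lifting property in representable terms via Proposition~\ref{P:morphismfromdisks}: since $\Dd_k(p,n)$ represents $D_k^{p,n}$, the morphism $f:A\to B$ has the RLP with respect to $0\to \Dd_k(p,n)$ if and only if $D_k^{p,n}(f):D_k^{p,n}(A)\to D_k^{p,n}(B)$ is surjective. For $k=0$, unravelling the definition one sees that $D_0^{p,n}(X)$ is naturally identified with $X_0^{p,n}$, so this case already matches $f_0$ being bidegreewise surjective. For $k\geq 1$ both directions have real content, and I will handle each by producing, respectively, a compatible sequence in $B$ from a single element and a compatible sequence in $A$ from one in $B$.

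For the $(\Leftarrow)$ direction, assume $f$ has RLP with respect to $J'_r$; given $\gamma\in B_k^{p,n}$ for some $k\leq r$, I would exploit the fact that each characteristic map of $B$ is an isomorphism in order to build, by downward induction on $i$, a cycle $\alpha_i\in B_i^{p,n}$ representing $(\psi_i^B)^{-1}(\alpha_{i+1})$, starting from $\alpha_k:=\gamma$. Doing the analogous construction with $d_k\gamma\in B_k^{p-k,n+1-k}$ as the starting point produces a compatible sequence $(\beta_0,\ldots,\beta_k)$, and together the data $(\alpha;\beta)$ defines an element of $D_k^{p,n}(B)$. Applying the RLP yields an element $(a;b)\in D_k^{p,n}(A)$ with $f_k(a_k)=\alpha_k=\gamma$, whence $f_k$ is surjective.

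The heart of the $(\Rightarrow)$ direction is a top-down lifting lemma: if $f$ is an $r$-fibration, if $k\leq r$, if $(\alpha_0,\ldots,\alpha_k)$ is a compatible sequence in $B^{p,n}$ and if $a_k\in A_k^{p,n}$ is any preimage of $\alpha_k$, then $a_k$ extends to a compatible sequence $(a_0,\ldots,a_k)$ in $A^{p,n}$ with $f_i(a_i)=\alpha_i$ for all $i$. I would prove this by downward induction on $i$. Suppose $a_i$ is constructed; since $\psi_{i-1}^A$ is an isomorphism, pick any cycle $\widetilde a_{i-1}\in A_{i-1}^{p,n}$ with $\psi_{i-1}^A[\widetilde a_{i-1}]=a_i$. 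Then $f_{i-1}(\widetilde a_{i-1})$ and $\alpha_{i-1}$ are both $(i-1)$-cycles in $B_{i-1}^{p,n}$ whose classes in $B_i$ coincide with $\alpha_i$, so their difference equals $d_{i-1}\delta$ for some $\delta\in B_{i-1}^{p+i-1,n+i-2}$; using that $f_{i-1}$ is surjective (which holds since $i-1<k\leq r$), lift $\delta$ to $e\in A_{i-1}$ and set $a_{i-1}:=\widetilde a_{i-1}-d_{i-1}e$. A direct check confirms that $a_{i-1}$ is a cycle, that $\psi_{i-1}^A[a_{i-1}]=a_i$, and that $f_{i-1}(a_{i-1})=\alpha_{i-1}$.

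With the lemma established, the RLP is reached in two applications: first lift $\alpha_k\in B_k^{p,n}$ to $a_k\in A_k^{p,n}$ via surjectivity of $f_k$ and apply the lemma to obtain a compatible sequence $(a_0,\ldots,a_k)$ above $(\alpha_0,\ldots,\alpha_k)$; then set $b_k:=d_ka_k$ (which automatically satisfies $f_k(b_k)=\beta_k$) and apply the lemma to the compatible sequence $(\beta_0,\ldots,\beta_k)$ in $B^{p-k,n+1-k}$ with starting datum $b_k$ to obtain $(b_0,\ldots,b_k)$. This assembles into the required lift in $D_k^{p,n}(A)$. The main obstacle I anticipate is correctly orienting the induction: a bottom-up lifting would attempt to promote an arbitrary lift of $\alpha_0$ into a cycle at every page, which naturally leaves obstructions in the kernel of $f$ that one cannot control; the top-down approach instead converts each discrepancy into an honest $d_{i-1}$-boundary in $B$, and that is precisely where surjectivity of $f_{i-1}$ is consumed.
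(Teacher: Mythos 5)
Your proposal is correct and follows essentially the same route as the paper: identify the right lifting property against $0\to\Dd_k(p,n)$ with surjectivity of the represented functor $D_k^{p,n}$, obtain surjectivity of $f_k$ by completing a single element to a pair of compatible sequences using the characteristic isomorphisms, and prove the converse by a downward induction that corrects a cycle representative by a boundary $d_{i-1}$ of an element lifted through the surjection $f_{i-1}$. Your write-up merely makes explicit two points the paper leaves implicit (the "clear" easy direction, and the second application of the lifting lemma to the sequence of $b'_i$'s starting from $d_ka_k$), so no substantive difference.
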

\begin{proof} Let $f:(A,\varphi)\rightarrow (B,\psi)$ be a morphism of spectral sequences.
It is clear that if $f$ is $J_r$-injective then $f_r$ is bidegreewise surjective. It is also clear that $f$ is $J_0$-injective if and only if $f_0$ is bidegreewise surjective.
Assume that for every $0\leq i\leq r$ we have that $f_i$ is bidegreewise surjective.
Let $(b_0,\ldots,b_r;b'_0,\ldots, b'_r)$ in $D_r^{p,n}(B)$. 
Since $f_r$ is surjective, there exists $a_r\in A_r$ such that $f_r(a_r)=b_r$ hence $f_r(d_ra_r)=b'_r$. We set $a'_r=d_ra_r$. 
Pick $u_{r-1}$ a cycle in $A_{r-1}$ such that $\varphi([u_{r-1}])=a_r$. Hence $\psi([f_{r-1}(u_{r-1})])=f_r(a_r)=b_r=\psi[b_{r-1}]$  and there exists $y\in B_{r-1}$ such that $f_{r-1}(u_{r-1})=b_{r-1}+d_{r-1}y$.  And $y=f_{r-1}(x)$  for some $x$ since $f_{r-1}$ is surjective. Hence $b_{r-1}=f_{r-1}(u_{r-1}-d_{r-1}x)$ and $a_{r-1}=u_{r-1}+d_{r-1}(x)$ satisfies the required conditions.
By induction, we obtain that there exists $(a_0,\ldots,a_r;a'_0,\ldots,a'_r)$ in $D_r^{p,n}(A)$ such that for all $i$ we have $f_i(a_i)=b_i$ and $f_i(a'_i)=b'_i$, giving the required lift.
\end{proof}

Note that this proposition can be stated as $f:A\rightarrow B$ is an $r$-fibration if and only if for every $0\leq k\leq r$ and for every $p,n\in\ZZ$, $D_k^{p,n}(f)$ is surjective.

\begin{prop}\label{I_r} A morphism of spectral sequences is an acyclic $r$-fibration if and only if  it has the right lifting property with respect to $I'_r$.
\end{prop}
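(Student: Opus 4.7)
The plan is to translate the lifting problems into explicit data via the representability given by Proposition \ref{P:morphismfromdisks}. A morphism $\Dd_{r+1}(p,n) \to A$ corresponds to an element $(\alpha_i;\beta_i)_{0 \le i \le r+1}$ of $D_{r+1}^{p,n}(A)$, and a morphism $\Ss_{r+1}(p,n) \to B$ is a pair of elements $(\gamma_i;\delta_i) \in D_r^{p-1,n-1}(B)$ and $(\epsilon_i;\zeta_i) \in D_r^{p+r,n+r-1}(B)$. Because $\varphi_{r+1}$ is the identity on matching $\kk$-summands, square-commutativity forces $f\alpha_i = \zeta_i$ and $f\beta_i = \delta_i$ for $0 \le i \le r$, while a lift amounts to compatible sequences $(\tilde\gamma_i),(\tilde\epsilon_i)$ in $A$ lifting $(\gamma_i),(\epsilon_i)$ with $d_r\tilde\gamma_r = \beta_r$ and $d_r\tilde\epsilon_r = \alpha_r$.

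For the direction ($\Rightarrow$), an acyclic $r$-fibration is in particular an $r$-fibration, so Proposition \ref{J_r} provides $J_k$-injectivity for $0 \le k \le r$. For $\varphi_{r+1}$-injectivity, I construct the lifts page by page, starting at page $r$: since $f_r$ is a surjective quasi-isomorphism, $\ker f_r$ is acyclic, so a preliminary lift obtained from surjectivity of $f_r$ may be corrected by a $d_r$-coboundary in $\ker f_r$ to attain the prescribed boundary value. The descent to lower pages then follows the inductive argument from the proof of Proposition \ref{J_r}, using surjectivity of $f_i$ and the characteristic isomorphisms.

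For the direction ($\Leftarrow$), $J_k$-injectivity for $k < r$ and Proposition \ref{J_r} give $f_k$ surjective for $k < r$. Applying $\varphi_{r+1}$-injectivity to squares with zero top map shows that compatible sequences of cycles in $B$ lift to compatible sequences of cycles in $A$, whence $H(f_r)$ is surjective. Given a general element $b \in B_r^{q,m}$, I set up a square at bidegree $(p,n) = (q-r, m-r+1)$ with $\epsilon_r = b$ and $\alpha_r$ a cycle lifting the cycle $d_r b$ (which exists by cycles-lift). For compatibility with $d_{r+1}\alpha_{r+1} = \beta_{r+1}$, I choose $\beta_r$ a cycle in $A_r$ representing $\psi_r^{-1}(d_{r+1}\psi_r[\alpha_r])$; a short computation using $\psi_r[d_r b] = 0$ in $B_{r+1}$ shows that $\delta_r = f(\beta_r)$ is a $d_r$-boundary in $B_r$, so a suitable $\gamma_r$ exists and the RLP yields $\tilde\epsilon_r \in A_r^{q,m}$ with $f(\tilde\epsilon_r) = b$, establishing surjectivity of $f_r$. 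For injectivity of $H(f_r)$, a variant with $\alpha_r = a$ (where $a$ is a cycle with $[f(a)] = 0$) and parallel choices for the remaining data produces $\tilde\epsilon_r$ with $d_r\tilde\epsilon_r = a$, exhibiting $a$ as a boundary.

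The main technical obstacle is navigating the constraint $d_{r+1}\alpha_{r+1} = \beta_{r+1}$ in the ($\Leftarrow$) direction: it forces a coupled choice of $\alpha_r$ and $\beta_r$, and one must verify that the induced $\delta_r = f(\beta_r)$ is a $d_r$-boundary in $B_r$ so that the full square can actually be built. Once this is settled, the remaining work is careful extension of compatible sequences along the pages, following the inductive technique already used for Proposition \ref{J_r}.
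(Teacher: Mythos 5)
Your proposal is correct and follows essentially the same route as the paper's proof: the same translation of lifting problems via the representability of $D_r^{p,n}$, the same three types of squares (zero top map to lift cycles, the coupled square for surjectivity of $f_r$, and the variant for injectivity of $H(f_r)$), and the same reduction of the converse direction to surjectivity plus a boundary correction. In fact you supply more detail than the paper at its two tersest points, namely the verification that $f(\beta_r)$ is a $d_r$-boundary so that the coupled square can be completed, and the acyclic-kernel argument for adjusting a preliminary lift to achieve the prescribed boundary value.
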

\begin{proof}
 Assume first that $f:A\rightarrow B$ is $I'_r$-injective. Then $f_k$ is bidegreewise surjective for every $0\leq k\leq r-1$. Let us show that $f_r$ is surjective on cycles. 
 Let $b_{r}\in B_{r}^{p+r,n+r-1}$ be such that $d_{r}(b_r)=0$. 
 One can then build a sequence $(b_0,\ldots, b_{r-1}, b_r; 0,\ldots, 0)$ in $D_r^{p+r,n+r-1}(B)$ which yields
 a commutative diagram
  \[
 \xymatrix{
 \Dd_{r+1}(p,n)\ar[d]_{\varphi_{r+1}}\ar[r]^-{0}&A\ar[d]^{f}\\
 \Ss_{r+1}(p,n)\ar[r]_-{b_r}&B } \]
This diagram admits a lift, giving rise to an element $(a_0,\ldots,a_r;0,\ldots,0)$ in $D_r^{p+r,n+r-1}(A)$ satisfying $d_ra_r=0$ and $f_i(a_i)=b_i$ for $0\leq i\leq r$. This proves that $f_r$ is surjective on cycles, and thus that $f_{r+1}$ is surjective.

Let us show that $f_r$ is surjective. Let $b_{r}\in B_{r}^{p+r,n+r-1}$. One can choose compatible sequences: $(b_0,\ldots,b_r;b'_0,\ldots,b'_r)$ in
$D_r^{p+r,n+r-1}(B)$ and from the first part a lift $a'=(a'_0,\ldots,a'_r)$ of $(b'_0,\ldots,b'_r)$ since $d_rb'_r=d_rd_rb_r=0$,
which yields again a commutative diagram
\[\xymatrix{
 \Dd_{r+1}(p,n)\ar[d]_{\varphi_{r+1}}\ar[r]^-{a'}&A\ar[d]^{f}\\
 \Ss_{r+1}(p,n)\ar[r]_-{b_r}&B } \]
admitting a lift. This gives an element $a_r\in A_r$ such that $d_ra_r=a'_r$ and $f_r(a_r)=b_r$.
As a consequence $f$ is $J'_r$-injective, and thus an $r$-fibration. 

We have proved that $f_{r+1}$ is surjective.
Let us show that $f_{r+1}$ is injective. Let $a_{r+1}\in A_{r+1}$ be such that $f_{r+1}(a_{r+1})=0$ and $(a_0,\ldots,a_{r+1};a'_0,\ldots,a'_{r+1})\in D_{r+1}(A)$ which represents $a_{r+1}$. Since $f_{r+1}(a_{r+1})=0$, we have $f_{r+1}(a'_{r+1})=0$ and there exist $b_r, c_r\in B_r$ such that $d_r(b_r)=f(a_r)$ and $d_r(c_r)=f_r(a'_r)$. This induces the following diagram denoted $(\ast)$
\[\xymatrix{
 \Dd_{r+1}(p,n)\ar[d]_{\varphi_{r+1}}\ar[r]^-{a_{r+1}}&A\ar[d]^f\\
 \Ss_{r+1}(p,n)\ar[r]_-{(c_r,b_r)}&B } \]
which admits a lift. In particular, there exists $\alpha_r$ with $d_r\alpha_r=a_r$. Hence $[a_r]=[0]$, that is $a_{r+1}=0$.
 
Conversely, assume $f$ is an acyclic $r$-fibration.  Consider the diagram $(\ast)$. Since $f_{r+1}(a_{r+1})=0$ and $f_{r+1}$ is an isomorphism we deduce that $a_{r+1}=0$ so that $a_r$ is a boundary, as well as $a'_{r}$. We conclude that a lift exists using the fact that $D_r(f)$ is surjective.
\end{proof}

\section{Comparisons with filtered complexes and multicomplexes}
\label{sec:comparisons}

In this section we compare $(\spse)_r$ with corresponding structures on filtered complexes and  multicomplexes. 
 In previous work~\cite{CELW20} we have established model category structures on these categories where the weak equivalences
are the $E_r$-quasi-isomorphisms.
 Thus we are able to compare the underlying almost Brown category structures with $(\spse)_r$.

\subsection{Filtered complexes}

Let $\fcpx$ be the category of filtered complexes  and let $(\fcpx)_r$ denote this category with the $r$-model structure of~\cite[Theorem 3.16]{CELW20}.
We consider it as an almost Brown category where the weak equivalences are the $E_r$-quasi-isomorphisms and the fibrations are the maps such that $Z_0(f)$ is surjective and $E_i(f)$ is surjective for $0\leq i\leq r$.

\begin{prop}
\label{prop:compare-fcx}
The spectral sequence functor $E:(\fcpx)_r\to (\spse)_r$ preserves weak equivalences and is a left exact functor of almost Brown categories.
\end{prop}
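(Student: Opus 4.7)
My plan is to verify each of the conditions for $E$ to be left exact between almost Brown categories, plus the separate preservation of weak equivalences. Preservation of weak equivalences is tautological: $f$ is a weak equivalence in $(\fcpx)_r$ if and only if $E_r(f)$ is a quasi-isomorphism, which is exactly the condition that $E(f)$ is an $E_r$-quasi-isomorphism in $(\spse)_r$. Preservation of fibrations is equally direct from the definitions: a fibration in $(\fcpx)_r$ has $E_i(f)$ surjective for $0 \leq i \leq r$, so $E(f)$ is an $r$-fibration. Preservation of acyclic fibrations follows immediately, and preservation of finite products holds since products are direct sums in both additive categories and $E_m$ commutes with direct sums.

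The substantive point is preservation of pullbacks of acyclic fibrations. Given a pullback square $X = U \times_B A$ in $\fcpx$ with $p \colon A \to B$ an acyclic fibration, I need $E(X) \cong E(U) \times_{E(B)} E(A)$ in $\spse$. Since $E(p)$ is an acyclic $r$-fibration and hence surjective, Lemma \ref{lem:pbsurj} says the pullback on the right exists and is computed pagewise. Therefore it suffices to show $E_m(X) \cong E_m(U) \times_{E_m(B)} E_m(A)$ as $m$-bigraded complexes for every $m \geq 0$.

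The key input is that the fibration property of $p$, combined with the fact that the model structure on $(\fcpx)_r$ makes the pulled-back projection $\pi_1 \colon X \to U$ again an acyclic fibration, yields surjectivity of $F_qA \to F_qB$ and $F_qX \to F_qU$ on every filtration level. This gives filtration-exact short exact sequences
\[0 \to K \to A \to B \to 0 \quad \text{and} \quad 0 \to K \to X \to U \to 0\]
of filtered complexes with common kernel $K = \ker p$. Passing to $E_0$ produces short exact sequences of $0$-bigraded complexes, and by induction on $m$, using that $E_m(p)$ and $E_m(\pi_1)$ are surjective for $m \leq r$, the associated long exact sequences collapse into short exact sequences on each page $E_m$. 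The resulting comparison diagram, with identity map between the two copies of $E_m(K)$, identifies $E_m(X)$ with the pullback of $E_m(U)$ and $E_m(A)$ over $E_m(B)$. For $m > r$ both $E_m(p)$ and $E_m(\pi_1)$ are isomorphisms, forcing $E_m(K) = 0$, and the desired equality is immediate. The main obstacle is the inductive splitting of the long exact sequences at each page; an alternative route would be a direct computation from the cycles-mod-boundaries description of $E_m$, using the filtration-level surjectivity of $p$ to correct representatives $(u, a)$ so that $g(u) = p(a)$ holds on the nose rather than only modulo a boundary.
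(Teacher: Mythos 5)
Your proof is correct and follows essentially the same route as the paper: both reduce the pullback condition to showing that the spectral sequence of the filtered pullback agrees pagewise with the pullback of Lemma~\ref{lem:pbsurj}, propagated from $E_0$ by induction using the surjectivity of $E_i(p)$ on every page. Your repackaging via two short exact sequences with common kernel $\ker p$ (rather than the single sequence $0\to X\to U\oplus A\to B\to 0$ used in that lemma) is only a minor variation, and your explicit appeal to filtration-level surjectivity (surjectivity of $Z_0(p)$, part of the fibration condition) correctly supplies the input the paper leaves implicit in the phrase ``the same proof as in Lemma~\ref{lem:pbsurj}''.
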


\begin{proof}
It is clear that $E$ preserves finite products, weak equivalences, fibrations and acyclic fibrations. For the pullback condition, consider the diagram:
 \[
\xymatrix{
&A\ar[d]^p\\
U\ar[r]_g&B
}
\]
in $(\fcpx)_r$ where $p$ is an acyclic fibration. The pullback in $\fcpx$ is 
\[X=\Ker(p-g:U\oplus A\to B)=\{(u,a)\, |\, g(u)=p(a), u\in U, a\in A\},
\]
with
$d(u,a)=(du,da)$, since $\fcpx$ is a pre-abelian category.
Using the surjectivity of $E_i(p)$ for all $i$, the same proof as in Lemma~\ref{lem:pbsurj} shows that the associated spectral sequence has
\[
E(X)_i=\{(u,a)\,|\, E(g)(u)=E(p)(a), u\in E(U)_i, a\in E(A)_i\}.
\]
That is, it has the pagewise pullback of $i$-bigraded complexes as its $i$-page and, by Lemma~\ref{lem:pbsurj}, this is the pullback in $\spse$.
\end{proof}

Recall the notion of $r$-homotopy between morphisms of filtered complexes from Definition~\ref{def:rhtpy_fcpx}.

\begin{prop}
The spectral sequence functor $E:(\fcpx)_r\to (\spse)_r$ preserves $r$-homotopy.
\end{prop}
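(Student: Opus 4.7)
My plan is to lift the filtered $r$-homotopy $h$ to a morphism $h^\sharp\colon A\to P(r;B)$ in $\fcpx$ landing in a filtered $r$-path object, and then apply $E$ to obtain an $r$-homotopy $E(A)\to P(r;E(B))$ in $\spse$ in the sense of Section~\ref{S:r-homotopy}. Take $P(r;B)^n=B^n\oplus B^{n-1}\oplus B^n$ with filtration $F_pP(r;B)^n=F_pB^n\oplus F_{p+r}B^{n-1}\oplus F_pB^n$ and differential $(x,y,z)\mapsto(dx,-x-dy+z,dz)$. The morphism $h^\sharp(a)=(f(a),h(a),g(a))$ respects the filtration because of the condition $h(F_pA)\subseteq F_{p+r}B$, and it is a chain map because of $dh+hd=g-f$; by construction $\partial^{\pm}h^\sharp=f,g$.

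The key steps are then: (i) identify $E(P(r;B))$ with the spectral sequence $P(r;E(B))$ of Definition~\ref{def:rpathob}; (ii) use this identification to view $E(h^\sharp)$ as the desired $r$-homotopy in $\spse$. For (i), the filtration formula yields the bidegreewise decomposition
\[Z_m^{p,q}(P(r;B))=Z_m^{p,q}(B)\oplus Z_m^{p+r,q+r-1}(B)\oplus Z_m^{p,q}(B),\]
and an analogous decomposition of $B_m^{p,q}$, both proved by substituting the formula for the differential and invoking $F_pB^{q-p}\subseteq F_{p+r-m-1}B^{q-p}$ when $m<r$. Dividing yields a canonical isomorphism of bigraded modules $E_m(P(r;B))^{p,q}\cong P_m(r;E(B))^{p,q}$. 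Step (ii) is immediate once (i) is proved, since $E(\partial^{\pm})$ pull back to $\partial^{\pm}$ under this identification, so $E(h^\sharp)$ becomes the required $r$-homotopy between $E(f)$ and $E(g)$.

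The main obstacle is sign reconciliation: the differential induced by $E$ on the middle summand of $E_m(P(r;B))$ is $-d_m^B$ independently of $m$, whereas Definition~\ref{def:rpathob} places the sign $(-1)^{m+r+1}$ on the middle summand of $D_m$ in $P_m(r;E(B))$. I will handle this by defining the isomorphism in (i) to include a page-dependent sign on the middle summand, chosen so that the induced differentials agree; compatibility of this sign with the characteristic maps of $P(r;E(B))$ forces a corresponding sign twist on those characteristic maps, which is self-consistent and yields the required natural isomorphism of spectral sequences.
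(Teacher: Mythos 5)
Your strategy coincides with the paper's: your filtered complex $P(r;B)$, with filtration $F_pB^n\oplus F_{p+r}B^{n-1}\oplus F_pB^n$, is exactly $\Lambda_r^{FC}\otimes B$ from the paper's proof, your $h^\sharp$ is the map the paper calls $h$, and both arguments reduce to identifying $E(\Lambda_r^{FC}\otimes B)$ with $P(r;E(B))=\Lambda_r\otimes E(B)$. You go further than the paper in actually proving this identification via the decomposition of $Z_m$ and $B_m$, and you are right that the signs do not match on the nose: for $m<r$ the differential induced on the middle summand of $E_m(P(r;B))$ is $-d_m$ (the cross terms $-x+z$ die because $x,z\in F_pB\subseteq F_{p+r-m-1}B$ lands them in $B_m^{p+r-m}$), whereas Definition~\ref{def:rpathob} prescribes $(-1)^{m+r+1}d_m$; these disagree whenever $m\not\equiv r\pmod 2$. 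At $m=r$ your formula reproduces the paper's $D_r$ exactly.

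The genuine problem is your proposed repair. A ``page-dependent sign'' is a scalar $\epsilon_m$ on the middle summand of the $m$-page, and conjugating a linear differential by a scalar leaves it unchanged, so no choice of $\epsilon_m$ can intertwine $-d_m$ with $+d_m$ unless $2d_m=0$. The sign must depend on the degree, not just the page: for instance $(x,y,z)\mapsto(x,(-1)^{q-p}y,z)$ in ambient bidegree $(p,q)$ does intertwine $d_m$ and $-d_m$, commutes with $\iota$ and $\partial^{\pm}$, and passes to homology compatibly with suitably signed characteristic maps. Alternatively you can bypass the identification of path objects altogether and verify directly, as in the proof of Proposition~\ref{equivrelTC}, that the maps $\widehat h_m$ induced by $h$ on $E_m$ for $0\leq m\leq r$ satisfy the relations characterising an $r$-homotopy (note that $h(Z_m^{p,q}(A))\subseteq Z_m^{p+r,q+r-1}(B)$ precisely because $m\leq r$). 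A secondary gap: your direct sum decomposition of cycles and boundaries is only valid for $m\leq r$; for $m>r$ the middle condition $-x-dy+z\in F_{p+r-m}$ genuinely couples the three summands, and you need a separate (easy) argument that $E_m(P(r;B))\cong E_m(B)$ there, for example using that $\partial^-$ is an acyclic $r$-fibration.
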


\begin{proof}
The notion of $r$-homotopy between morphisms $f,g:A\to B$ of filtered complexes can
be formulated in terms of a version $\Lambda_r^{FC}$ of  $\Lambda_r$ in filtered complexes. Let $\Lambda_r^{FC}=
Re_-\oplus Re_+\oplus Ru$ where $e_-, e_+$ are in degree $0$ and filtration $0$ and $u$ is in 
degree $1$ and filtration $-r$. The differential is determined by $d(e_-)=-u$, $d(e_+)=u$. And we have morphisms 
$\partial^-, \partial^+:\Lambda_r^{FC}\to R$ given by projection to $Re_-$ and $Re_+$ respectively.
Then giving an $r$-homotopy from $f$ to $g$ is equivalent to giving a morphism 
of filtered complexes $h:A\to \Lambda_r^{FC}\otimes B$ such that $\partial^-_B\circ h=f$ and $\partial^+_B\circ h=g$.

The associated spectral sequence $E(\Lambda_r^{FC})$ is $\Lambda_r$ as in Definition~\ref{def:lambda} and more generally
$E(\Lambda_r^{FC}\otimes A)\cong \Lambda_r\otimes E(A)$. Thus an $r$-homotopy $h$ between $f$ and $g$ gives rise to
an $r$-homotopy $E(h)$ between $E(f)$ and $E(g)$.
\end{proof}

\subsection{Multicomplexes}

Recall that we write $\ncpx$ for the category of $n$-multicomplexes and strict morphisms.
Here $2\leq n\leq \infty$, where the case $n=\infty$ is the category of multicomplexes.
An $n$-multicomplex has an associated functorial spectral sequence, described explicitly in~\cite{LWZ}.
Indeed there is a totalization functor to filtered complexes and then we take the associated spectral sequence.
That is, we have a commutative diagram:
\[
\begin{tikzcd}
\ncpx \arrow[r, "E'"]\arrow[d, "\Tot"] &\spse \\
\fcpx\arrow[ur, "E"'] &
\end{tikzcd}
\]

Note that we write $E'=E\circ\Tot$ for the composite functor, 
but we will often drop the dash and just write $E_i$ for the pages of the spectral sequence associated to a multicomplex.

We write $(\ncpx)_r$ for the category of $n$-multicomplexes and strict morphisms with the $r$-model structure of~\cite[Theorem 3.30]{FGLW}.
We use the same notation for the corresponding almost Brown category where the weak equivalences are the $E_r$-quasi-isomorphisms 
and the fibrations are the maps $f$ such that $E_i(f)$ is surjective for $0\leq i\leq r$.
\medskip

\begin{prop}
\label{prop:compare-mcx}
The spectral sequence functor $E':(\ncpx)_r\to (\spse)_r$ preserves weak equivalences and
 is a left exact functor of almost Brown categories.
\end{prop}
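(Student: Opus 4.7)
The strategy is to exploit the factorization $E'=E\circ\Tot$ and reduce to Proposition~\ref{prop:compare-fcx}. Preservation of weak equivalences, fibrations, acyclic fibrations and finite products is essentially built into the definitions: the $r$-model structure on $\ncpx$ is defined through the associated spectral sequence $E'$, so $f$ is a weak equivalence (respectively fibration, acyclic fibration) in $(\ncpx)_r$ exactly when $E'(f)$ has the corresponding property in $(\spse)_r$, and both $\Tot$ and $E$ commute with finite direct sums.

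The only substantive point is preservation of pullbacks of acyclic fibrations. Given an acyclic $r$-fibration $p:A\to B$ and an arbitrary map $g:U\to B$ in $\ncpx$, I would first exhibit the pullback $X$ explicitly: as a bigraded module,
\[X^{p,q}=\{(u,a)\in U^{p,q}\oplus A^{p,q}\mid g(u)=p(a)\},\]
with each $d_i$ defined componentwise. The multicomplex relations are inherited from $U$ and $A$ and the universal property in $\ncpx$ is immediate. Because totalization is formed diagonally with total differential $\sum_i d_i$, one obtains $\Tot(X)\cong \Tot(U)\times_{\Tot(B)}\Tot(A)$, the pullback in $\fcpx$.

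It then remains to check that $E$ carries this filtered-complex pullback to the pullback in $\spse$. Since $p$ is an acyclic $r$-fibration in $(\ncpx)_r$, the map $E_i(\Tot(p))=E_i(p)$ is surjective for every $i\geq 0$: surjective for $0\leq i\leq r$ by the fibration hypothesis, and an isomorphism for $i>r$ by the weak equivalence hypothesis. This pagewise surjectivity is precisely the input used in the proofs of Lemma~\ref{lem:pbsurj} and Proposition~\ref{prop:compare-fcx} to identify the spectral sequence of the pullback with the pagewise pullback, so the same argument applied to $\Tot(p)$ and $\Tot(g)$ yields $E'(X)\cong E'(U)\times_{E'(B)}E'(A)$ in $\spse$. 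The main subtlety I anticipate is that fibrations in $(\fcpx)_r$ carry an extra $Z_0$-surjectivity condition not present in $(\ncpx)_r$, so $\Tot$ does not literally send fibrations to fibrations and Proposition~\ref{prop:compare-fcx} cannot be invoked as a black box; fortunately, the underlying pullback argument uses only pagewise surjectivity of $E_i$, and it adapts unchanged.
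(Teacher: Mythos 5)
Your proof is correct, but it takes a genuinely different route from the paper's. The paper does not pass through $\Tot$ and Proposition~\ref{prop:compare-fcx}: after constructing the pullback $X=\Ker(p-g)$ in $\ncpx$ (justified via the module-category description of $\ncpx$), it runs an induction on the page number, showing $E_n(X)\cong Y_n$ implies $E_{n+1}(X)\cong Y_{n+1}$ as bigraded modules by the argument of Lemma~\ref{lem:pbsurj}, and then upgrading this to an isomorphism of $(n+1)$-bigraded complexes by appealing to the explicit description of the differentials of the spectral sequence of a multicomplex in the reference \cite{LWZ}. Your reduction via $E'=E\circ\Tot$ avoids that last appeal entirely: since $\Tot$ is additive and exact it carries the kernel $X$ to the pullback in $\fcpx$, and the filtered-complex argument then applies. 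Your flagged subtlety is real and correctly resolved --- $\Tot(p)$ need not satisfy the $Z_0$-surjectivity required of fibrations in $(\fcpx)_r$, so Proposition~\ref{prop:compare-fcx} cannot be cited as a black box, but its proof (like the paper's own remark following Proposition~\ref{prop:compare-mcx}) uses only surjectivity of $E_i(p)$ for all $i$, which holds for an acyclic fibration of multicomplexes (surjective for $i\le r$, an isomorphism for $i>r$). What your approach buys is economy and a cleaner logical dependence (no second page-by-page induction, no need for the explicit multicomplex differentials); what the paper's direct induction buys is independence from the exactness of $\Tot$ and a statement formulated entirely inside $\ncpx$.
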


\begin{proof}
It is clear that $E'$ preserves finite products, weak equivalences and fibrations. For the pullback condition, consider the diagram:
 \[
\xymatrix{
&A\ar[d]^p\\
U\ar[r]_g&B
}
\]
in $\ncpx$. The pullback in $\ncpx$ exists and it is 
\[
X=\Ker(p-g:U\oplus A\to B)=
\{(u,a)\,|\, g(u)=p(a), u\in U, a\in A\},
\] with
$d_i(u,a)=(d_iu,d_ia)$ for all $i\geq 0$. Indeed, the category $\ncpx$ has a description as a module category given in~\cite[Proposition 4.4]{FGLW}
and so it is abelian.

Let $Y_i$ denote the pullback in $i$-bigraded complexes of
\[
\xymatrix{
&E_i(A)\ar[d]^{E_i(p)}\\
E_i(U)\ar[r]_{E_i(g)}&E_i(B)
}
\]
and note that $E_0(X)\cong Y_0$ as $0$-bigraded complexes.

Now suppose that $p$ is an acyclic fibration, in particular $E_i(p)$ is surjective for all $i$, and assume that $E_n(X)\cong Y_n$ as $n$-bigraded complexes. 

As in Lemma~\ref{lem:pbsurj}, we have $Y_{n+1}\cong\ker(E_{n+1}(p)-E_{n+1}(g))$ and the argument of that proof also shows that 
we have an isomorphism of underlying bigraded $R$-modules  $E_{n+1}(X)\cong Y_{n+1}$. It remains to check that this can be upgraded
to an isomorphism of $(n+1)$-bigraded complexes and this can be seen from the explicit description of the differentials in the spectral sequence of a multicomplex
in~\cite{LWZ}.

Then $E'(X)$ has the pagewise pullback of $i$-bigraded complexes as its $i$-page and, by  Lemma~\ref{lem:pbsurj}, this is the pullback in $\spse$.
\end{proof}

\begin{rmk}
Note that  the proof shows that $E'$ preserves pullbacks along any map of any map $p$ such that $E_i(p)$ is surjective for all $i$.
\end{rmk}

\begin{rmk}
Note that in this multicomplex case $E'$ also reflects the weak equivalences and fibrations.
\end{rmk}

\begin{prop}
\label{prop:Erpath}
For $n=\infty$, the spectral sequence functor $E':(\ncpx)_r\to (\spse)_r$ preserves the $r$-path.
\end{prop}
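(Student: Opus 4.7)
The plan is to reduce to the filtered complex case via totalization. By analogy with the spectral sequence and filtered complex settings, the $r$-path of an $\infty$-multicomplex $A$ takes the form $P(r;A)_{\mathrm{MC}}=\Lambda_r^{\mathrm{MC}}\otimes A$, where $\Lambda_r^{\mathrm{MC}}$ is the $\infty$-multicomplex with basis $e_-,e_+$ in bidegree $(0,0)$ and $u$ in bidegree $(-r,1-r)$, equipped with $d_r(e_\pm)=\pm u$ and all other $d_i$ vanishing; the multicomplex identities are satisfied since $d_r(u)=0$ for bidegree reasons. The structure maps $\iota$ and $\partial^\pm$ are induced by the evident maps $\kk\to\Lambda_r^{\mathrm{MC}}\to\kk\oplus\kk$ on the generators.

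The key step is to verify that totalization intertwines the two $r$-path constructions, that is, there is an isomorphism of filtered complexes
$$\Tot\bigl(\Lambda_r^{\mathrm{MC}}\otimes A\bigr)\cong \Lambda_r^{\mathrm{FC}}\otimes \Tot(A),$$
compatible with $\iota$ and $\partial^\pm$. This reduces to the fact that $\Tot$ is monoidal together with the identification $\Tot(\Lambda_r^{\mathrm{MC}})\cong \Lambda_r^{\mathrm{FC}}$, which is immediate as both have the same basis $e_\pm,u$ in the stated degrees and filtration. Granting this, the proof of the preceding proposition already established the isomorphism $E(\Lambda_r^{\mathrm{FC}}\otimes M)\cong \Lambda_r\otimes E(M)$ for any filtered complex $M$, so we conclude
$$E'(P(r;A)_{\mathrm{MC}})=E(\Tot(\Lambda_r^{\mathrm{MC}}\otimes A))\cong E(\Lambda_r^{\mathrm{FC}}\otimes\Tot(A))\cong \Lambda_r\otimes E'(A)=P(r;E'(A)),$$
with compatibility of the structure maps following from naturality of the constructions involved.

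The main obstacle is the monoidal compatibility of $\Tot$ with the multicomplex tensor product. The differential on $X\otimes Y$ in $\infty$-multicomplexes decomposes as $d_l=\sum_{i+j=l}(\pm) d_i\otimes d_j$ with a delicate sign convention, while the filtered tensor product $\Tot(X)\otimes\Tot(Y)$ uses only the totalized differential $\sum_i d_i$ on each factor. One must check that summing the multicomplex tensor differentials over the total bigrading reproduces the filtered tensor differential with matching signs, and that the filtration by $p$-degree is respected on both sides. The hypothesis $n=\infty$ is essential here since the tensor product of two $n$-multicomplexes is generally not again an $n$-multicomplex, so the argument cannot be restricted to the finite case.
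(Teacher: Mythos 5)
Your route is genuinely different from the paper's. The paper takes the $r$-path $P_r(A)$ of a multicomplex as given by \cite[Definition 3.14]{CELW18} and verifies $E'(P_r(A))\cong P(r;E'(A))$ by a direct page-by-page check, using the explicit description of the spectral sequence of a multicomplex from \cite{LWZ}; you instead factor the claim through totalization, reducing it to the identity $E(\Lambda_r^{\mathrm{FC}}\otimes M)\cong\Lambda_r\otimes E(M)$ already recorded in the filtered-complex proposition. That reduction is a reasonable and arguably cleaner organisation: it isolates the only multicomplex-specific input as the isomorphism of filtered complexes $\Tot\bigl(\Lambda_r^{\mathrm{MC}}\otimes A\bigr)\cong\Lambda_r^{\mathrm{FC}}\otimes\Tot(A)$. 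What your version buys is modularity and reuse of the filtered case; what the paper's version buys is independence from any monoidality statement for $\Tot$.

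Two points need repair before this counts as a proof. First, your formula for the tensor differential, $d_l=\sum_{i+j=l}(\pm)\,d_i\otimes d_j$, cannot be right: $d_i\otimes d_j$ has bidegree $(-i-j,\,2-i-j)$, not the required $(-l,\,1-l)$. The tensor product of multicomplexes uses the Leibniz rule $d_l^{X\otimes Y}=d_l^X\otimes 1\pm 1\otimes d_l^Y$ with Koszul signs, which is exactly what Definition~\ref{def:rpathob} encodes on the spectral sequence side: the middle diagonal entry $(-1)^{m+r+1}d_m$ is $1\otimes d_m$ twisted by the degree of $u$, and $D_r$ adds the term $d_r^{\Lambda}\otimes 1$. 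With the correct formula the ``main obstacle'' you identify largely evaporates --- summing $d_l^{\Lambda}\otimes 1\pm 1\otimes d_l^A$ over $l$ gives the differential of the filtered tensor product, and filtration degrees add --- but this verification is the substance of your argument and is currently only announced, not performed. Second, you should confirm that your guessed $\Lambda_r^{\mathrm{MC}}\otimes A$ agrees with the $P_r(A)$ of \cite[Definition 3.14]{CELW18}, since that is the object the proposition refers to; the paper records the identification $P_r(C)=\Lambda_r\otimes C$ only later, in the proof of Proposition~\ref{prop:Erhomotopy}, so it cannot simply be assumed here. Your closing remark about why $n=\infty$ is needed is essentially right: for finite $n$ the $r$-path of \cite{FGLW} uses a different object $\Lambda_r^n$, precisely because $\Lambda_r\otimes C$ need not be an $n$-multicomplex.
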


\begin{proof}
The $r$-path for multicomplexes was defined in~\cite[Definition 3.14]{CELW18}. From the explicit description of the spectral sequence of a multicomplex, it is straightforward to see that the spectral sequence corresponding to the multicomplex $P_r(A)$ is $P(r;E(A))$.
We have $E(\iota_A)=\iota_{E(A)}$, $E(\delta^-_B)=\delta^-_{E(B)}$ and $E(\delta^+_B)=\delta^+_{E(B)}$. 
\end{proof}

\begin{prop}
\label{prop:Erhomotopy}
The spectral sequence functor $E':(\ncpx)_r\to (\spse)_r$ preserves $r$-homotopy.
\end{prop}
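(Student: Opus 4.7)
The plan is to proceed by direct transport of structure along $E'$, using Proposition~\ref{prop:Erpath} as the essential input. An $r$-homotopy from $f$ to $g$ between morphisms of $n$-multicomplexes (as defined in~\cite{CELW18}) is encoded as a morphism $h:A\to P_r(B)$ in $\ncpx$ satisfying $\partial^-_B\circ h=f$ and $\partial^+_B\circ h=g$, where $P_r(-)$ is the $r$-path functor on $n$-multicomplexes. The notion of $r$-homotopy on $\spse$ has the exactly parallel shape via the functorial path $P(r;-)$ of Definition~\ref{def:rpathob}.

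Given an $r$-homotopy $h:A\to P_r(B)$, I would apply $E'$ to obtain a morphism of spectral sequences $E'(h):E'(A)\to E'(P_r(B))$. The content of Proposition~\ref{prop:Erpath} is that there is an identification $E'(P_r(B))=P(r;E'(B))$ under which $E'(\iota_B)=\iota_{E'(B)}$, $E'(\partial^-_B)=\partial^-_{E'(B)}$ and $E'(\partial^+_B)=\partial^+_{E'(B)}$. Hence $E'(h)$ takes values in $P(r;E'(B))$, and the identities
\[
\partial^-_{E'(B)}\circ E'(h)=E'(\partial^-_B\circ h)=E'(f),\qquad \partial^+_{E'(B)}\circ E'(h)=E'(\partial^+_B\circ h)=E'(g)
\]
follow from functoriality of $E'$. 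This exhibits $E'(h)$ as an $r$-homotopy from $E'(f)$ to $E'(g)$.

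The main point requiring care is that Proposition~\ref{prop:Erpath} is stated for $n=\infty$, whereas the claim here is about $\ncpx$ for $2\leq n\leq\infty$. I would handle this by noting that the multicomplex $r$-path of~\cite{CELW18} restricts to a functor on $\ncpx$ when $n$ is finite (the differentials of $P_r(A)$ are built from those of $A$, so the vanishing $d_i=0$ for $i\geq n$ is preserved), and that the identification $E'(P_r(B))\cong P(r;E'(B))$ together with the compatibility of $\iota,\partial^\pm$ with $E'$ is proved identically using only the explicit description of the spectral sequence from~\cite{LWZ}. Thus the argument of the previous paragraph goes through verbatim for every $n$.

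I do not expect any serious obstacle: once the path object is shown to be preserved, $r$-homotopy being defined purely in terms of the path object makes its preservation a formal consequence of functoriality. The only subtle step is the verification that the finite-$n$ case of Proposition~\ref{prop:Erpath} holds, which amounts to checking that the same formulas that work for $n=\infty$ continue to work when the top differentials vanish; this is immediate from the explicit description.
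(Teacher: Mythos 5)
Your strategy for $n=\infty$ --- apply the spectral sequence functor to the homotopy $h:C\to P_r(D)$ and use Proposition~\ref{prop:Erpath} to identify the target with $P(r;E(D))$ together with its structure maps $\partial^{\pm}$ --- is the same as the paper's, but there is a gap already there. In the multicomplex setting an $r$-homotopy $h:C\to P_r(D)$ is by definition an \emph{$\infty$-morphism} of multicomplexes (\cite[Definition 3.16]{CELW18}), not a strict one, so you cannot simply ``apply $E'$'' to it: $E'$ is only defined on $\ncpx$, the category of strict morphisms. The paper handles this by extending the totalisation to a functor $\Tot:\Tc\to\fcpx$ on the category $\Tc$ of multicomplexes with $\infty$-morphisms (\cite[Theorem 3.8]{CELW18}) and using $\tilde{E}=E\circ\Tot$; it is $\tilde{E}(h)$ that provides the homotopy of spectral sequences. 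Your argument needs this extension, or some substitute for it, to be meaningful.

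The treatment of finite $n$ contains a second, more serious gap. You claim that $P_r$ restricts to a functor on $\ncpx$ because ``the differentials of $P_r(A)$ are built from those of $A$, so the vanishing $d_i=0$ for $i\geq n$ is preserved''. This is false: $P_r(A)=\Lambda_r\otimes A$ where $\Lambda_r$ carries a nonzero differential $d_r$, so $P_r(A)$ has a nonvanishing $d_r$ even when $A$ does not, and in particular it is not an $n$-multicomplex when $n\leq r$. Moreover, the notion of $r$-homotopy in $(\ncpx)_r$ is not defined via a restriction of $P_r$ at all, but via a different path object $P_r^n(C)=\Lambda_r^n\otimes C$ from \cite[Definition 5.5]{FGLW}. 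The paper bridges the two notions by the natural transformation $\alpha\otimes 1:P_r^n\to P_r$ (with $\alpha$ the identity in the bidegrees where this is possible and zero otherwise), which converts an $r$-homotopy in $\ncpx$ into one in multicomplexes, and then invokes the $n=\infty$ case. Without this comparison your argument does not cover finite $n$.
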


\begin{proof}
We start with the case $n=\infty$. Here 
an $r$-homotopy between morphisms of multicomplexes $f,g:C\to D$ is defined in~\cite[Definition 3.16]{CELW18} as
an $\infty$-morphism 
of multicomplexes $h:C\to P_r(D)$ such that $\partial^-_D\circ h=f$ and $\partial^+_D\circ h=g$.

We write $\Tc$ for the category of multicomplexes with $\infty$-morphisms.
By~\cite[Theorem 3.8]{CELW18}, we have a totalisation functor $\rm{Tot}: \Tc \to \fcpx$. 
We can refine the commutative diagram given earlier to
\[
\begin{tikzcd}
\ncpx \arrow[r, "i_n"] &\infty\text{-}\mcpx \arrow[r, "i"] &\Tc \arrow[r, "\tilde{E}"]\arrow[d, "\Tot"'] &\spse \\
&&\fcpx\arrow[ur, "E"'] &
\end{tikzcd}
\]
where $i_n$ and $i$ are inclusions of subcategories and the $\Tot$ discussed earlier can be obtained as the composite of the 
inclusions and the $\Tot$ on $\Tc$.

Thus, using Proposition~\ref{prop:Erpath}, we have a morphism of spectral sequences $E(h): E(C)\to E(P_r(D))=P(r;E(D))$. 
Since $E(\partial^-_D)=\partial^-_{E(D)}$ and $E(\partial^+_D)=\partial^+_{E(D)}$, it follows from Section~\ref{S:r-homotopy} that $E(h)$ is an $r$-homotopy between
$E(f)$ and $E(g)$.

For $n<\infty$, an $r$-path object for $n$-multicomplexes was given in~\cite[Definition 5.5]{FGLW}, giving rise to a notion of $r$-homotopy.
Let us write $P_r^n$ for the $r$-path in $n$-multicomplexes, in order to distinguish it from $P_r$, the $r$-path in multicomplexes. 
These  $r$-paths can be expressed in the form $P_r^n(C)=\Lambda_r^n \otimes C$ and $P_r(C)=\Lambda_r\otimes C$.
The two can be compared in the category of multicomplexes, since there is a natural transformation $P_r^n\to P_r$ such that
$P_r^n(C)=\Lambda_r^n \otimes C \to P_r(C)= \Lambda_r\otimes C$ is $\alpha\otimes 1_C$, where $\alpha$ is the identity in bidegrees
where this is possible and zero otherwise. 

Let  $h:C\to P_r(D)$ be an $r$-homotopy from $f$ to $g$ in $\ncpx$. Then $(\alpha\otimes 1_D)\circ h$ gives an $r$-homotopy from
$f$ to $g$ in multicomplexes. In other words, the inclusion of $n$-multicomplexes into multicomplexes preserves homotopy.
\end{proof}

\begin{rmk}
The inclusion $i_n$ of $n$-multicomplexes into multicomplexes also reflects homotopy. Indeed $i(P_r^n)$ gives another 
functorial path for multicomplexes and so gives rise to an equivalent notion of homotopy.
\end{rmk}

\end{document}